\newcommand{\code}[1]{\texttt{#1}}
\newcommand{\petsc}[0]{\code{PETSc}\xspace}
\newcommand{\petscfd}[0]{\code{PetscFD}\xspace}
\newcommand{\scomp}[0]{\emph{stencil composition}\xspace}
\title{On the order of accuracy for finite difference approximations of partial differential equations using stencil composition}
\author{Abhishek Mishra \and David Salac \and Matthew G. Knepley}
\author{Abhishek Mishra\thanks{Institute for Computational and Data Sciences, University at Buffalo, Buffalo, NY, 14260, USA.}
\and David Salac\footnotemark[1] \thanks{Dept of Mechanical and Aerospace Engineering, University at Buffalo, Buffalo, NY, 14260, USA.}
\and Matthew G. Knepley\footnotemark[1] \thanks{Dept of Computer Science and Engineering, University at Buffalo, Buffalo, NY, 14260, USA.}}
\begin{document}
\nolinenumbers
\maketitle

\begin{abstract}
  Stencil composition uses the idea of function composition, wherein two stencils with arbitrary orders of derivative are composed to obtain a stencil with a derivative order equal to sum of the orders of the composing stencils. In this paper, we show how stencil composition can be applied to form finite difference stencils in order to numerically solve partial differential equations (PDEs). We present various properties of stencil composition and investigate the relationship between the order of accuracy of the composed stencil and that of the composing stencils. We also present comparisons between the stability restrictions of composed higher-order PDEs to their compact versions and numerical experiments wherein we verify the order of accuracy by convergence tests. To demonstrate an application to PDEs, a boundary value problem involving the two-dimensional biharmonic equation is numerically solved using stencil composition and the order of accuracy is verified by performing a convergence test. The method is then applied to the Cahn-Hilliard phase-field model. In addition to sample results in 2D and 3D for this benchmark problem, the scalability, spectral properties, and sparsity is explored.
\end{abstract}

\begin{keywords}
  stencil, finite-difference, order-of-accuracy, composition, biharmonic, Cahn-Hilliard, static-scaling
\end{keywords}




\section{Introduction}

Partial differential equations (PDEs) have a wide variety of applications, ranging from engineering~\cite{stone} to biology~\cite{olsen}, as well as in machine learning applications such as image processing~\cite{tian}. The first step in numerically solving any such PDE requires a discretization technique which replaces the continuous equation by a discrete algebraic equation~\cite{arad}. The discretization technique involves approximating the derivative terms in the PDE by a numerical method such as the finite difference~\cite{fd1, fd2, fd3}, finite element~\cite{fe1, fe2, fe3}, or finite volume method~\cite{fv1, fv2, tang}, calculated at discrete points, or in other words, the grid points.

In this work we focus on the finite difference method, where a linear PDE is discretized at a central point via a linear combination of neighboring points. This combination of neighboring points and their associated weights is called a \emph{stencil}. For PDEs, stencils at points are coupled to the stencils at neighboring points, which leads to a coupled set of linear equations if the function is unknown. In particular, we are interested in situations where the PDE itself is written as a repeated series of derivatives, such as the biharmonic equation which can be stated as two applications of the Laplace operator, $\Delta\Delta u$, or where the method chosen to solve a PDE results from the multiple applications of operators. An example of the latter is the Closest Point Method (CPM), which is a technique to solve surface differential equations on embedded surfaces whereby interpolation and derivative stencils are combined~\cite{cp1, cp2}. The CPM uses the fact that if the solution on an embedded surface, such as those described by the level-set method~\cite{ls1, ls2}, is extended into the embedding space such that it is constant in the direction normal to the surface, then standard Cartesian derivatives will correspond to surface derivatives when interpolated back down to the surface. This allows for linear systems to be created that allows for the solution of differential equations on arbitrary surfaces in a systematic manner.

In both cases (biharmonic or CPM) the end result can be written as a series of (typically) sparse matrix-matrix products. For many reasons (numerical stability, linear system solver speed, memory pre-allocation, etc) it is often advantageous to obtain a single matrix representing these types of systems. The naive method would be to use each individual stencil to create individual matrices in memory and perform many matrix-matrix products. For very large systems it is necessary to have some information on the sparsity pattern of the resulting matrix, which is difficult to obtain for arbitrary systems. It would be advantageous to have a stencil of the final system \emph{before} the matrix is formed. This leads to the concept of \emph{stencil composition}, whereby one stencil is \emph{composed} with another stencil. Focusing on the composition of derivative stencils, this allows (for example) two stencils with arbitrary derivative orders of $\bar{a}$ and $\bar{b}$ to be composed to obtain a stencil with a derivative order of $\bar{a}+\bar{b}$. This composed stencil can then be used to create the matrix required for solution of the PDE. In this work we explore the use of stencil composition of lower-order derivative stencils to form a single higher-order derivative stencil. In addition to demonstrating that resultant stencil does approximate the desired derivative, we will also demonstrate that if the order of accuracy of these two stencils are $p$ and $q$ then the order of accuracy for the resulting stencil will be $\min(p,q)$, which demonstrates that stencil composition will not degrade the expected order-of-accuracy. We also explore the stability properties of the resulting matrices and demonstrate that they are not adversely affected and in certain cases the composed matrices are more stable than their compact counterparts.

The remainder of this paper is organized as follows. In section~\ref{sec:FDS}, we formulate the shorthand notation for the finite difference stencil using the Taylor series, encapsulated as a vector. This notation is demonstrated by deriving the first-order and second-order derivative stencils using this vector. In section~\ref{sec:composition}, the concept of stencil composition is introduced. The associativity of stencil composition is shown, as is the order of accuracy and stability. Section \ref{sec:2dcomp} extends the concept of stencil composition and its order of accuracy to higher-dimensions. Sections~\ref{sec:1dex} and \ref{sec:2dex} present numerical examples using some arbitrary functions in one and two dimensions respectively, along with convergence studies. Section \ref{sec:biharmonic} demonstrates the PDE application of stencil composition by numerically solving a biharmonic boundary value problem, and verifying the order of accuracy by performing a convergence test while in Section~\ref{sec:cahn} a benchmark problem, the Cahn-Hilliard equation, is analyzed. Lastly, in section \ref{sec:conc}, we draw some conclusions and discuss possible usage and applications of stencil composition.

%
%

\section{Numerical Discretization}

%
%
\subsection{Finite Difference Stencils}
\label{sec:FDS}

%
%
%

This section outlines the notation used in the remainder of this work. Let $f(\vb{x})$ be a function defined over a lattice in $\mathbb{R}^d$. We define the \emph{target} point $\vb{x}_0$ as the location where we wish to evaluate the function $f$ or some derivative of the function. Generally the target point does not need to lie on the lattice, but it will for derivative approximations, which is the assumption here. We define a \emph{source} point $\vb{x}_i\neq\vb{x}_0$ and can estimate the value of the function at the source point via a Taylor series centered at the target point,
\begin{align}\label{taylor}
  f(\vb{x}_i) = f(\vb{x}_0) + (\vb{x_i} - \vb{x}_0)^T\left\{D f(\vb{x}_0)\right\} + \frac{1}{2!} (\vb{x}_i - \vb{x}_0)^T \left\{D^2 f(\vb{x}_0)\right\} (\vb{x}_i - \vb{x}_0) + \cdots
\end{align}
where $D f(\vb{x}_0)$ is the gradient of $f(x)$ and $D^2 f(\vb{x}_0)$ is the Hessian, both evaluated at $\vb{x}_0$.

Assuming that both target and source points lie on a regular lattice with spacing $h$, we can represent the difference using the integer vector $\vb{u}_i \in \mathbb{Z}^d$, so that
\begin{align}
  \vb{x}_i - \vb{x}_0 = h \vb{u}_i.
\end{align}
We can then rewrite the series expansion \eqref{taylor} using multi-index notation,
\begin{align}\label{taylor2}
  f(\vb{x}_i) = \sum_{|\alpha| \ge 0} h^{\alpha}\dfrac{\vb{u}^\alpha_i}{\alpha!} f^{(\alpha)}(\vb{x}_0),
\end{align}
where $\vb{u}^\alpha_i$ refers to component-wise powers.
Let us introduce an integer $\beta$, which represents the shift of derivatives in any given direction,
\begin{align}
  f^{(\beta)}(\vb{x}_i) = \sum_{|\alpha| \ge 0} h^{\alpha} \dfrac{\vb{u}^\alpha_i}{\alpha!} f^{(\alpha+\beta)}(\vb{x}_0).
\end{align}
Therefore, when $\beta = 0$ we get \eqref{taylor2} and the Taylor series expansion \eqref{taylor}. For $\beta>0$ this results in a Taylor series of the $\beta^{th}$-derivative of $f(\vb{x}_i)$.
We can now compactly write this infinite series using the following notation,
\begin{align}
  f^{(\beta)}(\vb{x}_i) \longrightarrow \left(\vb{S}_i, \ \beta \right),
\end{align}
where $\left(\vb{S}_i, \beta\right)$ denotes the Taylor Series expansion of a single source point centered at the target point, and the infinite vector $\vb{S}_i$ contains coefficients associated with each $h^\alpha\vb{s}_{i,\alpha} f^{(\alpha+\beta)}$:
\begin{align}
    (\{1,1,\frac{1}{2!},\frac{1}{3!},\ldots\},\beta=0)=f(\vb{x}_0)+h f'(\vb{x}_0)+\frac{1}{2!}h^2 f''(\vb{x}_0)+\frac{1}{3!}h^3 f^{(3)}(\vb{x}_0)+\cdots
\end{align}
while
\begin{align}\label{eq:Sexample}
    (\{0,1,1,\frac{1}{2!},\frac{1}{3!},\ldots\},\beta=0)=&hf'(\vb{x}_0)+h^2 f''(\vb{x}_0)+\frac{1}{2!}h^3 f^{(3)}(\vb{x}_0)\nonumber \\
    &\qquad+\frac{1}{3!}h^4 f^{(4)}(\vb{x}_0)+\cdots
\end{align}
for a one-dimensional system with $u=1$. It must be noted that, if we divide $\vb{S}_i$ by $h^p$, the coefficients in the infinite vector $\vb{S}_i$ move $p$ slots to the left, and the $\beta$ increases by $p$. For instance, dividing \eqref{eq:Sexample} by $h$ shifts the coefficients left one slot and increases $\beta$ by one,
\begin{align}
    (\{1,1,\frac{1}{2!},\frac{1}{3!},\ldots\},\beta=1)=f'(\vb{x}_0)+h f''(\vb{x}_0)+\frac{1}{2!}h^2 f^{(3)}(\vb{x}_0)+\frac{1}{3!}h^3 f^{(4)}(\vb{x}_0)+\cdots.
\end{align}
The sequences centered at a given target point constitute a vector space, and thus it is possible to take linear combinations of $n$-different source points.

\begin{definition}
    A finite difference stencil approximating the $p^{th}$-derivative of $f^{(\beta)}(\vb{x}_0)$ with associated scalar weights $a_i \propto h^{-p}$, can be expressed using the following notation
    \begin{align}
        f^{(\beta+p)}(\vb{x}_0)&\approx\sum_i a_i f^{(\beta)}(\vb{x}_i) = \sum_i a_i \left(\sum_{|\alpha| \ge 0} h^{\alpha} \dfrac{\vb{u}^\alpha_i}{\alpha!} f^{(\alpha+\beta)}(\vb{x}_0)\right) \nonumber\\
        &\longrightarrow \sum_i a_i \left(\vb{S}_i, \beta\right)= \left(\vb{T}, \beta+p\right)= (\vb{T}, \bar{\beta}),
    \end{align}
    where, $\bar{\beta}$ indicates the increase in $\beta$ when weights are applied and the overall derivative order approximated.
\end{definition}
This can be alternatively written as
\begin{align}\label{eq:CompactNotation}
    t_{\alpha} = \sum_{|\alpha| \ge 0} \sum_i a_i \dfrac{\vb{u}^\alpha_i}{\alpha!}\; \text{such that} \sum_{|\alpha| \ge 0} t_{\alpha} h^{\alpha} f^{(\alpha+\beta)}(\vb{x}_0)\longrightarrow (\vb{T}, \bar{\beta}),
\end{align}
where $t_{\alpha}$ denotes the coefficient associated with $h^\alpha$.
The remainder of this work assumes that whenever the notation $\vb{T}$ is used, stencil weights have already been applied. The symbol $\bar{\beta}$ may thus be suppressed and stencil may be expressed using $(\vb{T}, \beta)$.

\begin{definition}\label{def:StencilRules}
A finite difference stencil expressed using the notation $(\vb{T}, \beta+p)$ approximating a $(\beta+p)^{th}$-order derivative with an order of accuracy $q$, must satisfy the following:
\begin{itemize}
    \item all coefficients associated with derivatives of order less than $\beta+p$ in a weighted sum of the Taylor series must go to zero, and thus,
    \item the first element in $\vb{T}$ must be equal to one, which represents the coefficient of the $(\beta+p)^{th}$ derivative. Therefore,
    \item the value of $\beta+p$ indicates what derivative order the stencil approximates. Moreover,
    \item all coefficients associated with derivatives of order greater than $\beta+p$ and less than $\beta+p+q$ must be equal to zero, and
    \item the coefficient associated with order of derivative $\beta+p+q$ must be non-zero.
\end{itemize}
\end{definition}
It is important to note that the first non-zero value that follows the first element in $\vb{T}$ provides the coefficient associated with the order of accuracy, $q$, as demonstrated below.

\subsubsection{First Derivative Stencil}

We will begin with an example in one dimension. The simplest approximation we can make is to use an evaluation to estimate the value of the target point, $x_0$, itself,
\begin{align}\label{fa}
  f(x_0) \longrightarrow (\{1, 0, 0, \ldots\},\ \beta = 0) = (\vb{S}_0,\beta=0).
\end{align}
Next, we could move our source evaluation point $x$ forward one lattice spacing $x_1 = x_0 + h$,
\begin{align}\label{fx1}
  f(x_1) \longrightarrow (\{1, 1, \frac{1}{2!}, \frac{1}{3!}, \ldots\},\ \beta = 0) = (\vb{S}_1,\beta=0),
\end{align}
or one step backward $x_{-1} = x_0 - h$,
\begin{align}\label{fx-1}
  f(x_{-1}) \longrightarrow (\{1, -1, \frac{1}{2!}, \frac{-1}{3!}, \ldots\},\ \beta = 0) = (\vb{S}_{-1},\beta=0).
\end{align}
Each of these is an $\mathcal{O}(h)$ approximation to $f(x_0)$ as
\begin{align}
  f(x_0) - f(x_1)    &\longrightarrow (\{0, -1, \frac{-1}{2!}, \ldots\},\ \beta = 0), \\
  f(x_0) - f(x_{-1}) &\longrightarrow (\{0,  1, \frac{-1}{2!}, \ldots\},\ \beta = 0),
\end{align}
and the first non-vanishing term is proportional to $h f'(x_0)$.

Applying first derivative weights $\vb{a} = \left(\dfrac{1}{2h}, \dfrac{-1}{2h}\right)$ to $\vb{S} = (\vb{S}_1, \vb{S}_{-1})$ from \eqref{fx1} and \eqref{fx-1} we get
\begin{align}
  \sum_i a_i \left(\vb{S}_i, \beta\right) &= \frac{1}{2h}(\vb{S}_1,\beta=0)-\frac{1}{2h}(\vb{S}_{-1},\beta=0) \nonumber \\
    &=\frac{1}{2h}(\{1, 1, \frac{1}{2!}, \frac{1}{3!}, \ldots\},\ \beta = 0)-\frac{1}{2h}(\{1, -1, \frac{1}{2!}, \frac{-1}{3!}, \ldots\},\ \beta = 0) \nonumber \\
  &=\frac{1}{2h}(\{0, 2, 0, \frac{2}{3!}, 0, \ldots\},\ \beta = 0)=(\{1, 0, \frac{1}{6}, 0, \ldots\},\ \beta = 1)\nonumber\\
  &\longrightarrow (\vb{T},\beta=1).
\end{align}
Since the first element of $\vb{T}$ is one, $\beta = 1$ therefore indicates that the stencil is an approximation of $f'(x_0)$. As the first non-zero value following the first element in $\vb{T}$ in the vector is in the location corresponding to $h^2$ this approximation is of order $\mathcal{O}(h^2)$.

The numerical approximation can thus be written in the following manner,
\begin{align}\label{f'stencil}
  f'(x_0) &\longrightarrow \frac{f(x_1) - f(x_{-1})}{2h} + \mathcal{O}(h^2).
\end{align}

\subsubsection{Second Derivative Stencil}

The second derivative can be approximated in a similar manner. In this case the stencil must zero out the coefficients associated with $f(x_0)$ and $f'(x_0)$, and result in a coefficient of one associated with $f''(x_0)$. Applying the weights $\vb{a} = \left(\dfrac{1}{h^2}, \dfrac{-2}{h^2}, \dfrac{1}{h^2}\right)$ to $\vb{S} = (\vb{S}_1, \vb{S}_0, \vb{S}_{-1})$ we obtain
\begin{align}\label{eq:CenterSecondDerivative}
  \sum_i a_i \left(\vb{S}_i, \beta\right) &= \frac{1}{h^2}(\vb{S}_1,\beta=0)-\frac{2}{h^2}(\vb{S}_{0},\beta=0)+\frac{1}{h^2}(\vb{S}_{-1},\beta=0)\nonumber \\
     &=\frac{1}{h^2}(\{1, 1, \frac{1}{2!}, \frac{1}{3!}, \ldots\},\ \beta = 0)-\frac{2}{h^2}(\{1, 0, 0, 0, \ldots\},\ \beta = 0) \nonumber \\
     & \qquad+\frac{1}{h^2}(\{1, -1, \frac{1}{2!}, \frac{-1}{3!}, \ldots\},\ \beta = 0)\nonumber \\
     &=\frac{1}{h^2}(\{0,0,1,0,\frac{1}{12},\ldots\},\beta=0)=(\{1,0,\frac{1}{12},\ldots\},\beta=2)\nonumber \\
     &\longrightarrow(\vb{T},\beta=2).
\end{align}
Clearly this is now an $\mathcal{O}(h^2)$ approximation to $f''(x_0)$.

%
%

\subsection{Stencil Composition}
\label{sec:composition}

We now introduce the concept of stencil composition, which makes use of the idea of function compositions. Just like a function composition, stencil composition is an operation which takes two stencils $A$ and $B$, with derivative orders of $\bar{a}$ and $\bar{b}$ respectively, and generates a stencil $C$ such that $C=B(A)$ with a derivative order of $\bar{a}+\bar{b}$. In this operation, the outer stencil $B$ is applied to the result obtained by applying the inner stencil $A$ to any function $f$. To formally introduce this concept let the two stencils be given as
\begin{align}
    A&=\sum_i a_i f^{(\beta)}(\vb{x}_i)=\sum_i a_i \left(\sum_{|\alpha| \ge 0} h^{\alpha} \dfrac{\vb{u}^\alpha_i}{\alpha!} f^{(\alpha+\beta)}(\vb{x}_0)\right) \label{eq:StencilA}\\
    &\textnormal{and}\nonumber\\
    B&=\sum_j b_j g^{(\gamma)}(\vb{x}_j)=\sum_j b_j \left(\sum_{|\alpha| \ge 0} h^{\alpha} \dfrac{\vb{v}^\alpha_j}{\alpha!} g^{(\alpha+\gamma)}(\vb{x}_0)\right)\label{eq:StencilB},
\end{align}
where the source points, $\vb{x}=\vb{x}_0+h\vb{u}$ and $\vb{x}=\vb{x}_0+h\vb{v}$, and the associated weights, $\vb{a}\propto h^{-\bar{a}}$ and $\vb{b}\propto h^{-\bar{b}}$, could differ between the two stencils.
For composition, the outer stencil, $B$ in this case, is written as working on function values, not derivatives, and therefore $\gamma=0$. The composition can then be written as
\begin{align}\label{eq:C=B(A)}
    C=B(A) = \sum_j b_j \sum_i a_i f^{(\beta)}(\vb{x}_i+\vb{x}_j) &= \sum_j \sum_i a_i b_j \left(\sum_{|\alpha| \ge 0} h^{\alpha} \dfrac{\left(\vb{u}_i+\vb{v}_j\right)^{\alpha}}{\alpha!} f^{(\alpha+\beta)}(\vb{x}_0)\right)\nonumber\\
    &\longrightarrow\sum_j\sum_i a_i b_j (\vb{S}_{i+j},\beta) = (\vb{T},\bar{\beta}=\beta+\bar{a}+\bar{b}).
\end{align}

As an example, we will derive a second derivative stencil using the composition of two first derivative stencils. As a reminder, the first derivative stencil \eqref{f'stencil} looks like
\begin{equation*}
  f'(x_0) \longrightarrow \frac{f(x_1) - f(x_{-1})}{2h} + \mathcal{O}(h^2),
\end{equation*}
and therefore $\vb{u}=\{1,-1\}=\vb{v}$ is the associated integer vector and $\vb{a}=\{1/(2h),-1/(2h)\}=\vb{b}$ are the stencil weights. Note that as we will be composing a stencil with itself, the integer vectors/weights of both the inner and outer stencil will be the same and $\beta=0$. The composition is then
\begin{align}\label{eq:FullComposition}
    B(A) &= \sum_j \sum_i a_i b_j  \left(\sum_{|\alpha| \ge 0} h^{\alpha} \dfrac{\left(\vb{u}_i+\vb{v}_j\right)^{\alpha}}{\alpha!} f^{(\alpha)}(\vb{x}_0)\right)\nonumber\\
        &=\sum_j b_j
            \Bigg[
                \dfrac{1}{2h}\left(\sum_{|\alpha| \ge 0} h^{\alpha} \dfrac{\left(1+\vb{v}_j\right)^{\alpha}}{\alpha!} f^{(\alpha)}(\vb{x}_0)\right) \nonumber\\
                & \qquad-\dfrac{1}{2h}\left(\sum_{|\alpha| \ge 0} h^{\alpha} \dfrac{\left(-1+\vb{v}_j\right)^{\alpha}}{\alpha!} f^{(\alpha)}(\vb{x}_0)\right)
            \Bigg]\nonumber\\
        &=\dfrac{1}{2h} \Bigg[
                \dfrac{1}{2h}\left(\sum_{|\alpha| \ge 0} h^{\alpha} \dfrac{2^{\alpha}}{\alpha!} f^{(\alpha)}(\vb{x}_0)\right)
                -\dfrac{1}{2h}\left(\sum_{|\alpha| \ge 0} h^{\alpha} \dfrac{0^{\alpha}}{\alpha!} f^{(\alpha)}(\vb{x}_0)\right)
            \Bigg]\nonumber\\
        & \qquad-\dfrac{1}{2h} \Bigg[
                \dfrac{1}{2h}\left(\sum_{|\alpha| \ge 0} h^{\alpha} \dfrac{0^{\alpha}}{\alpha!} f^{(\alpha)}(\vb{x}_0)\right)
                -\dfrac{1}{2h}\left(\sum_{|\alpha| \ge 0} h^{\alpha} \dfrac{(-2)^{\alpha}}{\alpha!} f^{(\alpha)}(\vb{x}_0)\right)
            \Bigg]\nonumber\\
        &=\dfrac{1}{4h^2}\left[\sum_{|\alpha| \ge 0} \left(h^{\alpha} \dfrac{2^{\alpha}+(-2)^{\alpha}}{\alpha!} f^{(\alpha)}(\vb{x}_0)\right)-2f(\vb{x}_0)\right]\nonumber\\
        &\longrightarrow \dfrac{1}{4h^2}(\{0,0,4,0,\frac{4}{3},\ldots\},\beta=0)=(\{1,0,\frac{1}{3},\ldots\},\beta=2)
\end{align}
where $0^0=1$. From this it is clear that this is a $\mathcal{O}(h^2)$ approximation to $f''(x_0)$.

This can be verified by computing the stencil composition of coefficients given by \eqref{eq:C=B(A)}. In this case we have
\begin{align}\label{eq:manual_C=A(B)}
    B(A) &= \sum_j^2 \sum_i^2 a_i b_j f(\vb{x}_i+\vb{y}_j)=\frac{1}{4h^2}\left(f(x_{-2})-2f(x_0)+f(x_{2})\right)
\end{align}
where, $x_2 = x_0+2h$ and $x_{-2} = x_0-2h$. The expansion vectors of $f(x_2)$ and $f(x_{-2})$ can be written as
\begin{align}
  \label{fx2}f(x_2) &\longrightarrow (\{1, 2, 2, \frac{4}{3}, \frac{2}{3}, \ldots\},\ \beta = 0), \\
  \label{fx-2}f(x_{-2}) &\longrightarrow (\{1, -2, 2, \frac{-4}{3}, \frac{2}{3}, \ldots\},\ \beta = 0).
\end{align}
Inserting the stencil vectors into \eqref{eq:manual_C=A(B)} results in
\begin{align}\label{eq:SmallComposition}
    B(A) &= \frac{1}{4h^2}\Bigg[(\{1, 2, 2, \frac{4}{3}, \frac{2}{3}, \ldots\},\ \beta = 0 ) +
        (\{1, -2, 2, \frac{-4}{3}, \frac{2}{3}, \ldots\},\ \beta = 0) \nonumber \\
        &\qquad - 2(\{1, 0, 0, 0, \ldots\},\ \beta = 0)\Bigg]\nonumber\\
        &= \frac{1}{4h^2}(\{0, 0, 4, 0, \frac{4}{3}, \ldots\},\ \beta = 0 ) = (\{1, 0, \frac{1}{3}, \ldots\},\ \beta = 2 ),
\end{align}
which matches the previous result.

\subsubsection{Associativity}
\label{assoc}

\begin{lemma}
Stencil composition follows the rule of associativity, i.e., no matter how we compose the two stencils $A$ and $B$, with order of derivatives $\bar{a}$ and $\bar{b}$ respectively, the composed stencil $C=A(B)=B(A)$ is equal with a derivative order of $\bar{a}+\bar{b}$.
\end{lemma}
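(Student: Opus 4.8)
The plan is to prove associativity (and in fact commutativity, since the lemma asserts $A(B) = B(A)$) directly from the coefficient formula for composition established in \eqref{eq:C=B(A)}. The essential observation is that the composition of two stencils reduces to a single algebraic operation on the integer source vectors, namely the shift $\vb{u}_i + \vb{v}_j$, together with the product of the weights $a_i b_j$. Since both integer addition and scalar multiplication are commutative and associative, the same properties should transfer to stencil composition. I would therefore avoid re-expanding Taylor series and instead work entirely at the level of the formula $\sum_j \sum_i a_i b_j\,(\vb{S}_{i+j}, \beta)$.

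First I would write out $B(A)$ and $A(B)$ side by side using \eqref{eq:C=B(A)}. For $B(A)$ we have $C = \sum_j \sum_i a_i b_j\,(\vb{S}_{i+j}, \beta+\bar a+\bar b)$, where the resulting source shift is $\vb{u}_i + \vb{v}_j$ and the weight is $a_i b_j$. For $A(B)$, the roles swap: the shift becomes $\vb{v}_j + \vb{u}_i$ and the weight becomes $b_j a_i$. The key step is then to note that $\vb{u}_i + \vb{v}_j = \vb{v}_j + \vb{u}_i$ as elements of $\mathbb{Z}^d$ and $a_i b_j = b_j a_i$ as scalars, so that the two double sums are term-by-term identical (after the harmless relabeling of which index is summed first). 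The derivative order is $\beta + \bar a + \bar b$ in both orderings because $\bar a + \bar b = \bar b + \bar a$, which also explains why the composed derivative order is well defined.

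To extend this to genuine associativity for three or more stencils, I would iterate the same argument. Composing a third stencil $E$ with weights $e_k$ and integer vectors $\vb{w}_k$, one checks that $E(B(A))$ and $(E\circ B)(A)$ and $E((B(A)))$ all collapse to the triple sum $\sum_k \sum_j \sum_i a_i b_j e_k\,(\vb{S}_{i+j+k}, \beta + \bar a + \bar b + \bar e)$. Here the full weight is the scalar product $a_i b_j e_k$ and the full shift is $\vb{u}_i + \vb{v}_j + \vb{w}_k$; associativity of the composition follows because both the scalar product and the integer-vector sum are associative, so the grouping of the operations does not affect the final multi-set of (shift, weight) pairs.

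The only subtle point, and the one I would treat carefully, is the bookkeeping around the derivative shift $\beta$ and the requirement from the definition of composition that the outer stencil act on function values (so that its $\gamma = 0$). I expect the main obstacle to be verifying that this normalization does not break symmetry: when we write $B(A)$ we set $\gamma = 0$ for $B$, whereas when we write $A(B)$ we would set $\beta = 0$ for $A$ in its role as the outer stencil. I would argue that because the composed object depends only on the total shift applied to the underlying Taylor coefficients and on the product of weights, and because the weights scale as $h^{-\bar a}$ and $h^{-\bar b}$ regardless of which stencil is labeled inner or outer, the resulting vector $\vb{T}$ and its associated derivative order $\beta + \bar a + \bar b$ are identical in both groupings. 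Once this is established, commutativity and associativity are immediate consequences of the commutativity and associativity of $+$ on $\mathbb{Z}^d$ and of $\times$ on the scalars.
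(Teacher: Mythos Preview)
Your proposal is correct and follows essentially the same approach as the paper: the paper's proof is a one-line manipulation of the double sum in \eqref{eq:C=B(A)}, swapping the order of summation and using $a_i b_j = b_j a_i$ together with the symmetry of $\vb{x}_i+\vb{x}_j$, which is exactly your key step. Your additional discussion of three-stencil associativity and the $\gamma=0$ bookkeeping goes beyond what the paper actually proves, but is consistent with it.
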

\begin{proof}
Using the stencils $A$ and $B$ previously defined in \eqref{eq:StencilA} and \eqref{eq:StencilB},
\begin{align}\label{eq:B(A)=A(B)}
    B(A) &= \sum_j b_j \sum_i a_i f^{(\beta)}(\vb{x}_i+\vb{x}_j) = \sum_j \sum_i a_i b_j f^{(\beta)}(\vb{x}_i+\vb{x}_j) = \sum_i \sum_j  a_i b_j f^{(\beta)}(\vb{x}_i+\vb{x}_j) \nonumber \\
         &= \sum_i a_i \sum_j  b_j f^{(\beta)}(\vb{x}_i+\vb{x}_j) = A(B),
\end{align}
where any derivative shift simply moved from the $A$ stencil to the $B$ stencil.
\end{proof}

As a demonstration consider the composition of the first-order accurate forward-finite difference approximations to the first and second derivatives:
\begin{align}
    f'(x_0) &\longrightarrow \frac{f(x_1) - f(x_0)}{h} + \mathcal{O}(h),\\
    f''(x_0) &\longrightarrow \frac{f(x_2) - 2f(x_1)+f(x_0)}{h^2} + \mathcal{O}(h),
\end{align}
where the $A$ stencil corresponds to $f'(x_0)$ and the $B$ stencil to $f''(x_0)$. This results in $\vb{u}=\{1,0\}$ and $\vb{v}=\{2,1,0\}$ as the associated integer vectors with weights $\vb{a}=\{1/h,-1/h\}$ and $\vb{b}=\{1/h^2,-2/h^2,1/h^2\}$, respectively.
First consider $B(A)$:
\begin{align}\label{eq:AssociativityB(A)}
    B(A) &= \sum_j^2 \sum_i^3 a_i b_j f(\vb{x}_i+\vb{x}_j)=\sum_j^2 b_j\left[\dfrac{1}{h^2}\left(f(x_{2+j})-2f(x_{1+j})+f(x_{0+j})\right)\right]\nonumber\\
    &=\dfrac{1}{h}\left[\dfrac{1}{h^2}\left(f(x_{3})-2f(x_{2})+f(x_{1})\right)\right]
        -\dfrac{1}{h}\left[\dfrac{1}{h^2}\left(f(x_{2})-2f(x_{1})+f(x_{0})\right)\right]\nonumber\\
    &=\dfrac{1}{h^3}\left(-f(x_{0})+3f(x_{1})-3f(x_{2})+f(x_{3})\right)
\end{align}
The overall result can then be obtained via the expansions for $f(x_0)$ to $f(x_3)$,
\begin{align}\label{eq:AssociativityOrder}
    B(A) &= \dfrac{1}{h^3}\Bigg[\left(-\{1,0,0,0,0,\ldots\},\beta=0\right)+3\left(\{1,1,\dfrac{1}{2},\dfrac{1}{6},\dfrac{1}{24},\ldots\},\beta=0\right)\nonumber\\
    &\qquad-3\left(\{1,2,2,\dfrac{4}{3},\dfrac{2}{3},\ldots\},\beta=0\right)+\left(\{1,3,\dfrac{9}{2},\dfrac{9}{2},\dfrac{27}{8},\ldots\},\beta=0\right)\Bigg]\nonumber\\
    &=\dfrac{1}{h^3}\left(\{0,0,1,\dfrac{3}{2},\ldots\},\beta=0\right)=\left(\{1,\dfrac{3}{2},\ldots\},\beta=3\right),
\end{align}
which corresponds to an $\mathcal{O}(h)$ approximation to $f'''(x_0)$. Derivation of this result using the method shown in \eqref{eq:FullComposition} can be found in the appendix.

Associativity can be demonstrated in this example via determining the composition $A(B)$:
\begin{align}\label{eq:AssociativityA(B)}
    A(B) &= \sum_i^3 \sum_j^2 a_i b_j f(\vb{x}_i+\vb{x}_j)
            =\sum_i^3 a_i\left[\dfrac{1}{h}\left(f(x_{i+1})-f(x_{i+0})\right)\right]\nonumber\\
            &=\dfrac{1}{h^2}\left[\dfrac{1}{h}\left(f(x_{3})-f(x_{2})\right)\right]
                -\dfrac{2}{h^2}\left[\dfrac{1}{h}\left(f(x_{2})-f(x_{1})\right)\right]
                +\left[\dfrac{1}{h}\left(f(x_{1})-f(x_{0})\right)\right]\nonumber\\
    &=\dfrac{1}{h^3}\left(-f(x_{0})+3f(x_{1})-3f(x_{2})+f(x_{3})\right),
\end{align}
which is the same as \eqref{eq:AssociativityB(A)} and will thus have the approximation and order as \eqref{eq:AssociativityOrder}.

\subsubsection{Order of Accuracy}

The rate at which the local truncation error, expressed as a function of $h$, approaches zero as $h$ approaches zero is referred to as the \emph{order of accuracy} of the method~\cite{macthesis}. In order to show the order of accuracy of the \emph{composed} stencil, we need to introduce the concept of re-targeting.
This involves moving a stencil from a target point $\vb{y}_0\neq\vb{x}_0$ to the original target point $\vb{x}_0$. This can be accomplished by taking the Taylor Series of a linear combination, $(\vb{T},\beta)$, and accounting for the additional error terms. Recalling that $t_{\alpha}$ represents the coefficient multiplying the $h^{\alpha}f^{(\alpha+\beta)}$ term of the linear combination Taylor Series, the updated series at the original target point $\vb{x}_0$ can be obtained by replacing the original derivatives $f^{(\alpha+\beta)}$ in the sum by their own Taylor Series expanded about the original target point:
\begin{align}\label{eq:Retarget}
    \sum_{|\alpha| \ge 0} t_{\alpha}h^{\alpha} &\sum_{|\delta| \ge 0} \dfrac{(\vb{y}_0-\vb{x}_0)^{\delta}}{\delta!} f^{(\alpha+\beta+\delta)}(\vb{x}_0)
        = \sum_{|\alpha| \ge 0} t_{\alpha}h^{\alpha} f^{(\alpha+\beta)}(\vb{x}_0) \nonumber \\
        & \qquad+ \sum_{|\alpha| \ge 0} t_{\alpha}h^{\alpha}\sum_{|\delta| \ge 1} \dfrac{(\vb{y}_0-\vb{x}_0)^{\delta}}{\delta!} f^{(\alpha+\beta+\delta)}(\vb{x}_0)
        \longrightarrow (\vb{T},\beta) + (\vb{C}_{\vb{y}_0\rightarrow\vb{x}_0},\beta),
\end{align}
which demonstrates that re-targeting is simply the addition of the original Taylor series with a correction series given by $(\vb{C}_{\vb{y}_0\rightarrow\vb{x}_0},\beta)$. The first non-zero element in $\vb{C}_{\vb{y}_0\rightarrow\vb{x}_0}$ will be one order higher to the first non-zero element in $\vb{T}$ due to $\|(\vb{y}_0-\vb{x}_0)^{\delta}\|\geq h$ when $\delta\geq 1$.

As a demonstration consider re-targeting the one-dimensional, second-order accurate, center-finite-difference stencil of the second derivative at the point $y_0=x_0+h$ to the point $x_0$. Recall in this case we have $\vb{T}=\{1,0,1/12,0,\ldots\}$ and $\beta=2$. Therefore, the correction can be written as
\begin{align}
    \sum_{|\alpha| \ge 0} t_{\alpha}h^{\alpha}\sum_{|\delta| \ge 1} \dfrac{(y_0-x_0)^{\delta}}{\delta!} &f^{(\alpha+2+\delta)}(\vb{x}_0)\nonumber\\
    &=h f^{(3)}(x_0)+\dfrac{1}{2}h^2 f^{(4)}(x_0)+\dfrac{1}{4}h^3 f^{(5)}(x_0)+\cdots \nonumber \\
    &\longrightarrow (\{0,1,\dfrac{1}{2},\dfrac{1}{4},\ldots\},\beta=2).
\end{align}
Following the previous statements, the first non-zero element in the correction is of one order of $h$ higher than the original expansion, which corresponds to the second location in this case.

Adding this to the original series we obtain
\begin{align}
    (\{1,0,1/12,0,\ldots\},\beta=2)+(\{0,1,\dfrac{1}{2},\dfrac{1}{4},\ldots\},\beta=2)=
    (\{1,1,\dfrac{7}{12},\dfrac{1}{4},\ldots\},\beta=2),
\end{align}
which corresponds to the the second-derivative of $f(x)$ approximated at $x_0$ but using the stencil centered at $y_0=x_0+h$. From this, re-targeting can be thought of approximating a derivative at a point \emph{away} from $x_0$ and then calculating how well that is an approximation is of the same derivative \emph{at} $x_0$.

It is now possible to determine the order of accuracy of stencil composition. Let us take our inner stencil, $A$, as defined earlier in \eqref{eq:StencilA}. Using \eqref{eq:CompactNotation}, we can write the inner stencil as,
\begin{align}
    \sum_i a_i \left(\sum_{|\alpha| \ge 0} h^{\alpha} \dfrac{\vb{u}^\alpha_i}{\alpha!} f^{(\alpha+\beta)}(\vb{x}_0)\right)= \sum_{|\alpha| \ge 0} t_{\alpha} h^{\alpha} f^{(\alpha+\beta)}(\vb{x}_0),
\end{align}
where $t_{\alpha}$ denotes the coefficient associated with $h^\alpha$ and takes into account the associated weights $a_i$.

When applying the outer-stencil, $B$, the inner stencil is being evaluated away from the target point. Therefore, the inner stencils need to be re-targeted.  Using \eqref{eq:Retarget}, the composition can be written as,
\begin{align}
    B(A) = \sum_j b_j\sum_{|\alpha| \ge 0} t_{\alpha} h^{\alpha} f^{(\alpha+\beta)}(\vb{x}_j)
    &=\sum_j b_j\sum_{|\alpha| \ge 0} t_{\alpha}h^{\alpha} \sum_{|\delta| \ge 0} \dfrac{(\vb{x}_j-\vb{x}_0)^{\delta}}{\delta!} f^{(\alpha+\beta+\delta)}(\vb{x}_0)
\end{align}
Rearranging the summations on the right hand side we can rewrite the equation above as
\begin{align}\label{eq:AccuracyComposition}
    B(A) &= \sum_{|\alpha| \ge 0} t_{\alpha}h^{\alpha} \sum_j b_j \sum_{|\delta| \ge 0} \dfrac{(\vb{x}_j-\vb{x}_0)^{\delta}}{\delta!} f^{(\alpha+\beta+\delta)}(\vb{x}_0) \nonumber \\
    &= \sum_{|\alpha| \ge 0} t_{\alpha}h^{\alpha} \sum_j b_j \sum_{|\delta| \ge 0} h^{\delta} \dfrac{\vb{v}^\delta_j}{\delta!} f^{(\alpha+\beta+\delta)}(\vb{x}_0)
    = \sum_{|\alpha| \ge 0} t_{\alpha}h^{\alpha} \sum_{|\delta| \ge 0} t_\delta h^{\delta} f^{(\alpha+\beta+\delta)}(\vb{x}_0),
\end{align}
where $t_{\delta}$ denotes the coefficient associated with $h^\delta$ and takes into account the associated weights $b_j$. We will use this result for proving the resulting order of accuracy of a \emph{composed} stencil, demonstrated below.

\begin{lemma}\label{Lemma:Accuracy}
Stencil composition of two stencils $A$ and $B$ with orders of accuracy $q_a$ and $q_b$, respectively, results in a composed stencil $C = B(A)$ with order of accuracy $q_c = min(q_a, q_b)$.
\end{lemma}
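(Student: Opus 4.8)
The plan is to read the composition identity \eqref{eq:AccuracyComposition} as a statement that the Taylor coefficients of the composed stencil are the discrete convolution (Cauchy product) of the coefficient sequences of the two composing stencils, and then to track which entry of that convolution first fails to vanish. First I would put both composing stencils into the normalized form of Definition~\ref{def:StencilRules}. Writing the inner stencil as $A = \sum_{k\ge 0}(\vb{T}_A)_k\, h^k f^{(\beta+\bar a+k)}(\vb{x}_0)$ and the outer as $B = \sum_{l\ge 0}(\vb{T}_B)_l\, h^l f^{(\bar b+l)}(\vb{x}_0)$, the stencil rules give $(\vb{T}_A)_0=(\vb{T}_B)_0=1$, together with $(\vb{T}_A)_k=0$ for $1\le k<q_a$ and $(\vb{T}_A)_{q_a}=c_a\ne 0$, and likewise $(\vb{T}_B)_l=0$ for $1\le l<q_b$ with $(\vb{T}_B)_{q_b}=c_b\ne 0$. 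Pulling the factors $h^{-\bar a}$ and $h^{-\bar b}$ out of the raw coefficients in \eqref{eq:AccuracyComposition} and re-indexing by $k=\alpha-\bar a$, $l=\delta-\bar b$ converts the double sum into
\begin{align*}
    B(A) = \sum_{k\ge 0}\sum_{l\ge 0}(\vb{T}_A)_k\,(\vb{T}_B)_l\, h^{k+l}\, f^{(\beta+\bar a+\bar b+k+l)}(\vb{x}_0).
\end{align*}

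Next I would collect terms by their total order $n=k+l$, defining the composed coefficient $(\vb{T}_C)_n=\sum_{k=0}^{n}(\vb{T}_A)_k\,(\vb{T}_B)_{n-k}$, so that $(\vb{T}_C)$ is exactly the convolution of $(\vb{T}_A)$ and $(\vb{T}_B)$. The leading coefficient $(\vb{T}_C)_0=(\vb{T}_A)_0(\vb{T}_B)_0=1$ confirms that the composed stencil approximates $f^{(\beta+\bar a+\bar b)}$, consistent with the derivative-order bookkeeping of \eqref{eq:C=B(A)}. To locate the first error term I would show that $(\vb{T}_C)_n=0$ for every $1\le n<q_c:=\min(q_a,q_b)$: in any pair $(k,\,n-k)$ summing to such an $n$, either $k\ge 1$, which forces $k<q_a$ and hence $(\vb{T}_A)_k=0$, or $k=0$, which forces $n-k=n<q_b$ and hence $(\vb{T}_B)_n=0$; either way the product vanishes. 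This step already establishes that the composed order of accuracy is \emph{at least} $\min(q_a,q_b)$, which is the essential ``composition does not degrade accuracy'' content.

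To pin down equality I would then evaluate the candidate leading error coefficient $(\vb{T}_C)_{q_c}$. When $q_a<q_b$ only the term $k=q_a,\,l=0$ survives, giving $(\vb{T}_C)_{q_a}=c_a\cdot 1\ne0$; symmetrically, when $q_b<q_a$ only $k=0,\,l=q_b$ survives, giving $1\cdot c_b\ne0$. In both cases the order is exactly $\min(q_a,q_b)$, and one checks that the known examples (e.g. the two compositions in Section~\ref{sec:composition}) are instances of this convolution.

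The main obstacle is the borderline case $q_a=q_b$. Here two terms survive and $(\vb{T}_C)_{q_c}=c_a+c_b$, so the argument yields exact order $\min(q_a,q_b)$ only when $c_a+c_b\ne0$; should the two leading error coefficients cancel, the composed stencil would in fact be \emph{more} accurate than $\min(q_a,q_b)$. I would therefore either state the conclusion as the sharp lower bound $q_c\ge\min(q_a,q_b)$ with equality whenever $c_a+c_b\ne0$ (in particular whenever $q_a\ne q_b$, and generically otherwise), or append the hypothesis $c_a+c_b\ne0$ to the lemma. In every case the convolution structure makes transparent that no term of order lower than $\min(q_a,q_b)$ can ever be introduced, which is precisely the claim that accuracy is not lost under composition.
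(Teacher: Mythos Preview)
Your proof is correct and takes essentially the same approach as the paper: both interpret \eqref{eq:AccuracyComposition} as a Cauchy product of the coefficient sequences $\vb{T}_A$ and $\vb{T}_B$ and then argue by cases on $q_a$ versus $q_b$. Your treatment is in fact slightly sharper than the paper's, since you explicitly flag the borderline case $q_a=q_b$ with $c_a+c_b=0$ in which the composed accuracy can exceed $\min(q_a,q_b)$---a subtlety the paper's proof and Proposition~\ref{Prop:CoeffAccuracy} pass over.
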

\begin{proof}
Let the inner stencil $A$ be an approximation with an order of derivative of $p_a$ and order of accuracy of $q_a$. Then, we can write $A$ as,
\begin{align}
    A = f^{(p_a)} + \sum_{|\alpha| \ge q_a} t_{\alpha} h^{\alpha} f^{(\alpha+p_a)}(\vb{x}_0)&\longrightarrow (\vb{T}_A, \beta = p_a)\nonumber\\
    &\longrightarrow(\{1,0,\cdots,0,t_{q_a},\ldots\}, \beta = p_a),
\end{align}
where $t_{q_a}$ is the coefficient associated with order of accuracy term $h^{q_a}$. Similarly, we can write the outer stencil $B$ approximating an order of derivative of $p_b$ and order of accuracy $q_b$ as
\begin{align}
    B \longrightarrow (\vb{T}_B, \beta = p_b)\longrightarrow(\{1,0,\cdots,0,t_{q_b},\ldots\}, \beta = p_b).
\end{align}

From \eqref{eq:AccuracyComposition}, we can write the composition $B(A)$ as,
\begin{align*}
    B(A) = \sum_{|\alpha| \ge 0} t_{\alpha}h^{\alpha} \sum_{|\delta| \ge 0} t_\delta h^{\delta} f^{(\alpha+\beta+\delta)}(\vb{x}_0).
\end{align*}
Using coefficient vectors $\vb{T}_A$ and $\vb{T}_B$ the same equation can be written as,
\begin{align}
    B(A) &\longrightarrow (\vb{T}_A, \beta = p_a) \circ (\vb{T}_B, \beta = p_b)\longrightarrow (\vb{T}_A\circ\vb{T}_B,\beta=p_a+p_b).
\end{align}
Recall that $\vb{T}_A$ and $\vb{T}_B$ are simply short-hand notation for
\begin{align}
    \vb{T}_A&\longrightarrow \{1\cdot h^0,0\cdot h^1,\cdots,0\cdot h^{q_a-1}, t_{q_a} h^{q_a},\ldots\},\\
    \vb{T}_B&\longrightarrow \{1\cdot h^0,0\cdot h^1,\cdots,0\cdot h^{q_b-1}, t_{q_b} h^{q_b},\ldots\}.
\end{align}
Then the list composition can be written as
\begin{align}
    \vb{T}_A\circ\vb{T}_B\longrightarrow &1h^0(\{1h^0,0,\cdots,0,t_{q_b}h^{q_b},\ldots\})+0h^1(\{1h^0,0,\cdots,0,t_{q_b}h^{q_b},\ldots\})+\cdots\nonumber\\
    &+t_{q_a}h^{q_a}(\{1h^0,0,\cdots,0,t_{q_b}h^{q_b},\ldots\})+\ldots
\end{align}

If $q_a < q_b$, the first non-unitary element in $B(A)$ would be, $t_{q_a} h^{q_a}$, implying an order of accuracy of $q_a$. Similarly if $q_b < q_a$, the first non-zero term would be, $t_{q_b} h^{q_b}$, and hence order accuracy being $q_b$. In the case of $q_a = q_b$, the first non-zero term would be $t_{q_a} h^{q_a}+t_{q_b} h^{q_b}$ and since $q_a = q_b$ this implies the order of accuracy is $q_a = q_b$. This proves that when two stencils with orders of accuracy $q_a$ and $q_b$ respectively are composed, the order of accuracy of the composed stencil is $\textnormal{min}(q_a, q_b)$.
\end{proof}

Another observation can be made here regarding the coefficient of the leading order error term in the composed stencil, written as a Proposition below.
\begin{proposition}\label{Prop:CoeffAccuracy}
Stencil composition of two stencils $A$ and $B$ with orders of accuracy $q_a$ and $q_b$, respectively, with coefficients of the leading order error terms being $t_{q_a}$ and $t_{q_b}$, respectively, leads to a composed stencil $C = B(A)$ with order of accuracy $q_c=min(q_a, q_b)$ and a leading-order error coefficient $t_{q_c}$ equal to,
\[
    t_{q_c} = \left\{\begin{array}{ll}
    t_{q_a} & \text{if } q_c = q_a,\\
    t_{q_b} & \text{if } q_c = q_b,\\
    t_{q_a}+t_{q_b}, & \text{if } q_c = q_a = q_b.
    \end{array}\right.
\]
\end{proposition}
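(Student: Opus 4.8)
The plan is to build directly on the proof of Lemma~\ref{Lemma:Accuracy}, tracking the numerical \emph{value} of the first non-trivial coefficient rather than merely its location. Recalling the short-hand coefficient vectors
\[
  \vb{T}_A\longrightarrow\{1,0,\ldots,0,t_{q_a},\ldots\},\qquad
  \vb{T}_B\longrightarrow\{1,0,\ldots,0,t_{q_b},\ldots\},
\]
equation~\eqref{eq:AccuracyComposition} exhibits the composed sequence $\vb{T}_A\circ\vb{T}_B$ as the discrete convolution (Cauchy product) of the two factors: the coefficient multiplying $h^{n}f^{(p_a+p_b+n)}(\vb{x}_0)$ in $B(A)$ is $\sum_{\alpha+\delta=n}t_{\alpha}\,t_{\delta}$, where $t_{\alpha}$ ranges over the entries of $\vb{T}_A$ and $t_{\delta}$ over those of $\vb{T}_B$. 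Establishing this convolution identity is the first step, and it follows at once from the list-composition expansion already displayed in the proof of Lemma~\ref{Lemma:Accuracy}.

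Second, I would record the two structural facts about the factors supplied by Definition~\ref{def:StencilRules}: each sequence has value $1$ in its zeroth slot, and each vanishes in every slot strictly between the zeroth and the $q_a$-th (respectively $q_b$-th). Writing $q_c=\min(q_a,q_b)$, this immediately annihilates the convolution coefficient for every $n$ with $0<n<q_c$, since any decomposition $\alpha+\delta=n$ forces $0<\alpha<q_a$ or $0<\delta<q_b$, so one factor is zero. This reproduces the order-of-accuracy conclusion of Lemma~\ref{Lemma:Accuracy} and confirms that the leading error term of the composed stencil sits at order $h^{q_c}$.

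Third, I would evaluate the convolution precisely at $n=q_c$ by enumerating the admissible pairs $(\alpha,\delta)$ with $\alpha+\delta=q_c$, retaining only those for which both entries are non-zero, i.e.\ each index is either $0$ or at least its corresponding $q$. When $q_a<q_b$ the sole surviving pair is $(\alpha,\delta)=(q_a,0)$, giving $t_{q_a}\cdot 1=t_{q_a}$; when $q_b<q_a$ the sole survivor is $(0,q_b)$, giving $t_{q_b}$; and when $q_a=q_b=q_c$ exactly the two pairs $(q_c,0)$ and $(0,q_c)$ survive, summing to $t_{q_a}+t_{q_b}$. These three outcomes are precisely the cases claimed in the statement.

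The one real subtlety — and the step I would treat most carefully — is ruling out any ``mixed'' contribution at order $q_c$ in which both $\alpha>0$ and $\delta>0$. Such a pair would require $\alpha\ge q_a$ and $\delta\ge q_b$ simultaneously, whence $q_c=\alpha+\delta\ge q_a+q_b>q_c$ (using $q_a,q_b\ge 1$), a contradiction; hence no mixed term ever reaches order $q_c$ and the enumeration above is exhaustive. Since the identification of the coefficient is independent of the higher-order entries of $\vb{T}_A$ and $\vb{T}_B$, this completes the argument and shows the proposition is essentially a refinement of Lemma~\ref{Lemma:Accuracy} that reads off the value, and not just the position, of the leading error coefficient.
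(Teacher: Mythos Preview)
Your proposal is correct and follows essentially the same route as the paper: the paper does not give a separate proof of the proposition but treats it as an observation extracted from the proof of Lemma~\ref{Lemma:Accuracy}, where the list-composition expansion $\vb{T}_A\circ\vb{T}_B$ is written out and the first non-unitary coefficient is read off in each of the three cases exactly as you describe. Your Cauchy-product formulation and the explicit exclusion of mixed contributions at order $q_c$ make the argument a little more rigorous than the paper's inspection-based reading, but the content is the same.
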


As an example, we compose a second-order accurate first derivative stencil with a fourth-order accurate first derivative stencil to obtain a second-derivative stencil. As a reminder, the second-order accurate first derivative stencil \eqref{f'stencil} is
\begin{equation*}
  f'(x_0) \longrightarrow \frac{f(x_1) - f(x_{-1})}{2h} + \mathcal{O}(h^2)\longrightarrow(\{1, 0, \frac{1}{6}, 0, \ldots\},\ \beta = 1),
\end{equation*}
and the fourth-order accurate first derivative stencil can be written as
\begin{align}\label{eq:4orderf'}
    f'(x_0) &\longrightarrow \frac{8f(x_1) - f(x_2) + f(x_{-2}) - 8f(x_{-1})}{12h} + \mathcal{O}(h^4)\nonumber \\
            &\longrightarrow\left(\{1,0,0,0,\frac{-1}{30},\ldots\},\beta=1\right).
\end{align}

Let the outer-stencil correspond to second-order accurate approximation to $f'(x_0)$ while the inner-stencil is the fourth-order accurate approximation to $f'(x_0)$. This results in $\vb{u}=\{1,2,-2,-1\}$ and $\vb{v}=\{1,-1\}$ as the associated integer vectors with weights $\vb{a}=\{8/12h$, $-1/12h$, $1/12h$, $-8/12h\}$ and $\vb{b}=\{1/h,-1/h\}$, respectively.
Performing the composition we get
\begin{align}
    B(A) &= \sum_j^2 \sum_i^4 a_i b_j f(\vb{x}_i+\vb{x}_j)\nonumber\\
    &=\sum_j^2 b_j\left[\dfrac{1}{12h}\left(8f(x_{1+j})-f(x_{2+j})+f(x_{-2+j})-8f(x_{-1+j})\right)\right]\nonumber\\
    &=\dfrac{1}{h}\left[\dfrac{1}{12h}\left(8f(x_{2})-f(x_{3})+f(x_{-1})-8f(x_{0})\right)\right] \nonumber \\
        &\qquad-\dfrac{1}{h}\left[\dfrac{1}{12h}\left(8f(x_{0})-f(x_{1})+f(x_{-3})-8f(x_{-2})\right)\right]\nonumber\\
    &=\dfrac{1}{12h^2}\left(-f(x_{-3})+8f(x_{-2})+f(x_{-1})-16f(x_{0})+f(x_{1})+8f(x_{2})-f(x_{3})\right)
\end{align}

The overall result can then be obtained via the expansions for $f(x_{-3})$ to $f(x_3)$,
\begin{align}
    B(A) &= \dfrac{1}{12h^2}\Bigg[-\left(\{1,3,\dfrac{9}{2},\dfrac{9}{2},\dfrac{27}{8},\ldots\},\beta=0\right)+8\left(\{1, -2, 2, \frac{-4}{3}, \frac{2}{3}, \ldots\},\beta=0\right)\nonumber\\
    &\qquad+\left(\{1, -1, \frac{1}{2}, \frac{-1}{6}, \frac{1}{24} \ldots\},\beta=0\right) -16\left(\{1,0,0,0,0,\ldots\},\beta=0\right)\nonumber\\
    &\qquad+\left(\{1, 1, \frac{1}{2}, \frac{1}{6}, \frac{1}{24} \ldots\},\beta=0\right)+8\left(\{1, 2, 2, \frac{4}{3}, \frac{2}{3}, \ldots\},\beta=0\right)\nonumber\\
    &\qquad-\left(\{1,-3,\dfrac{9}{2},\dfrac{-9}{2},\dfrac{27}{8},\ldots\},\beta=0\right)\Bigg] \nonumber\\
    &=\dfrac{1}{12h^2}\left(\{0,0,12,0,2,\ldots\},\beta=0\right)=\left(\{1,0,\dfrac{1}{6},\ldots\},\beta=2\right),
\end{align}

Upon composing a fourth-order accurate stencil with a second order accurate stencil the leading order error term is in the location corresponding to $h^2$ and thus the composition order of accuracy is $\textnormal{min}(2,4)$, demonstrating Lemma \ref{Lemma:Accuracy}. This also demonstrates Proposition \ref{Prop:CoeffAccuracy}, as the coefficient of error term in the composed stencil is $\dfrac{1}{6}$ which matches the coefficient of the error term in second-order accurate stencil in \eqref{f'stencil}.

\subsubsection{Stability}

The stability of a finite difference approximation to a differential equation is as important as the accuracy. Unlike for accuracy, it is not possible to construct a generalized theorem for the stability of stencil composition from the stability of the inner stencils. Instead, sample cases will be considered and the general stability of stencil composition will be compared to the compact case.

Consider the stability of the heat equation for $f(x,t)$, $\partial_t f=\partial_{xx} f$. This can be discretized via Eq.~\eqref{eq:manual_C=A(B)} and a first-order discretization in time about the point $x_0$:
\begin{equation}
    \dfrac{f_0^{n+1} - f_0^n}{\Delta t} = \dfrac{f_{-2}^n - 2 f_{0}^n + f_{+2}^n}{4 h^2},
\end{equation}
where $n$ refers to time $t_n$ and $n+1$ to time $t_n+\Delta t$, $f_0^n = F(x_0, t_n)$, and $f_{\pm 2}^n = f(x_0\pm 2h,t_n)$. Assume that the solution for the $n^{th}$ time step is $f_j^n=\xi_k^n e^{\imath k j h}$ where $k$ is the wave mode and $\xi_k$ is the growth factor. Using this in the discretization, dividing by $f_0^n$ results, and solving for $\xi_k$ results in
\begin{equation}
    \xi_k = 1 + \dfrac{\Delta t}{2h^2}\left(\cos{2hk}-1\right).
\end{equation}
Stability requires that $|\xi_k|\leq 1$ and thus
\begin{align}
    -1 & \leq 1 + \dfrac{\Delta t}{2h^2}\left(\cos{2hk}-1\right) \leq 1 \nonumber\\
    -2 & \leq \dfrac{\Delta t}{2h^2}\left(\cos{2hk}-1\right) \leq 0 \nonumber\\
    -4 & \leq \dfrac{\Delta t}{h^2}\left(\cos{2hk}-1\right) \leq 0.
\end{align}
As $\left(\cos{2hk}-1\right)\in [-2,0]$ we have
\begin{equation}
    0 \leq \dfrac{\Delta t}{h^2} \leq 2.
\end{equation}
Thus, if $\Delta t \leq 2 h^2$ holds the method is considered stable. This compares to a requirement of $\Delta t\leq h^2/2$ for the compact version using locations $x_{-1}$, $x_0$, and $x_{+1}$. This should be expected as the distance between points in the stencil is twice that for the compact stencil.

Common compositions of higher-order derivatives, such as approximating $f_{xxxx}$ using 
$\partial_{xx}\left(\partial_{xx}f\right)$ 
and center-finite differences, results in the same stencils as compact schemes. To demonstrate this and to explore any changes in stability consider the stability requirement for the fourth-order PDE $\partial_t f = -\partial_x\left(M\partial_x\left(\kappa\partial_{xx}f\right)\right)$, where $M$ is a (potentially varying) mobility and $\kappa$ is a gradient energy coefficient. This equation a simplified version of the Cahn-Hilliard equation explored in Sec.~\ref{sec:cahn}, neglecting the chemical free energy. Typically, the gradient energy coefficient is a constant and thus for simplicity $\kappa=1$ in this example. In the case where $M$ is a constant this can be discretized directly from $\partial_t f=-\partial_{xxxx}f$. When $M$ is spatially varying, though, it is easier to implement the method through stencil composition whereby $\partial_x\left(\partial_{xx}\right)$ is composed first, scaled by $M$ at the grid location, and that result is then composed with $\partial_x$. Again, for simplicity let use consider $M=1$, but compare the stability requirement for a direct discretization versus a composed one, summarized below. Full details of the stability requirement derivation is given in Appendix~\ref{sec:appendixB}.

\newcolumntype{C}[1]{>{\centering\arraybackslash}p{#1}}
\renewcommand{\arraystretch}{1.25}
\begin{table}[H]
    \centering
    \caption{Stability requirement on the time-step, $\Delta t\leq \alpha h^4$, for $\partial_t f=-\partial_x\left(\partial_x\left(\partial_{xx}f\right)\right)$.}
    \begin{tabular}{ C{4cm} | C{2cm} | C{2cm} }
    \toprule
     & $\alpha$ & Support \\
    \midrule
      Compact  & $1/8$ & $[-2,2]$ \\ 
      Composed: $\partial_{xx}\left(\partial_{xx}\right)$ & 1/8 & $[-2,2]$ \\
      Composed: $\partial_x\left(\partial_x\left(\partial_{xx}\right)\right)$ & 27/32 & $[-3,3]$ \\
    \bottomrule
    \end{tabular}
\end{table}

As can be seen the stability requirement for the composed stencil of $\partial_x\left(\partial_x\left(\partial_{xx}\right)\right)$ has a stable time step that is 6.75 times greater than the compact scheme. This should be expected as the support is wider. While this does result in a slightly higher memory footprint when implemented, it does allow for easily incorporating variable mobility as mentioned previously.

%
%

\subsection{Higher-dimensional Stencil Composition}
\label{sec:2dcomp}

Another benefit of using stencil composition to obtain higher order derivatives is its possibility of extension to higher dimensions. By performing stencil composition in two or three dimensions one can easily obtain higher-order mixed derivatives. When two stencils in different dimensions are composed the resulting stencil is simply the outer product of the two stencils, demonstrated using two-dimensional examples below.

If we have two stencils $\vb{T}_{X_i}$ and $\vb{T}_{Y_j}$ in the $x$ and $y$ directions, respectively, approximating derivative of orders $p_x$ and $p_y$, such that
\begin{align}
  f^{(p_x)}(\vb{x}_i) \longrightarrow \left(\vb{T}_{X_i}, \ \beta_x = p_x\right)\ \text{and}\ f^{(p_y)}(\vb{y}_j) \longrightarrow \left(\vb{T}_{Y_j}, \ \beta_y = p_y \right),
\end{align}
then the composition of the two stencils yields a mixed derivative equal to the outer product of the two stencils,
\begin{align}
  f^{(p_x,\ p_y)}(\vb{x}_i,\ \vb{y}_j) \longrightarrow (\vb{T}_{X_i} \otimes \vb{T}_{Y_j}, \ \beta_x=p_x,\ \beta_y=p_y)
\end{align}

As an example, let us compose first derivative stencils in the $x$ and $y$ direction to obtain the mixed derivative $f^{(1,1)}(x,y)$. Let $\vb{x}_0=(x_0,y_0)$ be the target point. We can write the stencils of second-order accurate first derivative stencils in each direction thusly,
\begin{align}
  f'(x_0) &\longrightarrow (\{1, 0, \frac{1}{6}, 0, \ldots\},\ \beta_x = 1), \\
  f'(y_0) &\longrightarrow (\{1, 0, \frac{1}{6}, 0, \ldots\},\ \beta_y = 1).
\end{align}
Taking the outer product we obtain
\begin{align}
    (\{1, 0, \frac{1}{6}, 0, \ldots\} \otimes \{1, 0, \frac{1}{6}, 0, \ldots\},\ \beta_x = 1,\ \beta_y = 1)=&\nonumber \\
        \left(\left[
        \begin{array}{ccccc}
        1 & 0 & \frac{1}{6} & 0 &\cdots \\
        0 & 0 & 0 & 0 &\cdots \\
        \frac{1}{6} & 0 & \frac{1}{36} & 0 &\cdots \\
        0 & 0 & 0 & 0 &\cdots \\
        \vdots & \vdots & \vdots & \vdots & \ddots
        \end{array}
        \right],\ \beta_x = 1,\ \beta_y = 1\right),&
\end{align}
which can be written in the expanded form as the following,
\begin{align}
    f^{(1,1)}(\vb{x}_i,\ \vb{y}_j) \longrightarrow& f^{(1,1)}(x_0,y_0) + \frac{1}{6}h_x^2\cdot f^{(3,1)}(x_0,y_0) + \frac{1}{6}h_y^2\cdot f^{(1,3)}(x_0,y_0)\nonumber\\
    &\qquad+ \frac{1}{36}h_x^2h_y^2\cdot f^{(3,3)}(x_0,y_0) + \ldots,
\end{align}
where $h_x$ and $h_y$ are the spacing in the lattice in $x$ and $y$ directions respectively. For simplicity, assume that $h_x = h_y = h$. We can then rewrite the equation above as following,
\begin{align}
    f^{(1,1)}(\vb{x}_i,\ \vb{y}_j) \longrightarrow& f^{(1,1)}(a,b) + \frac{1}{6}h^2\cdot f^{(3,1)}(a,b) + \frac{1}{6}h^2\cdot f^{(1,3)}(a,b)\nonumber\\
        &\qquad+ \frac{1}{36}h^4\cdot f^{(3,3)}(a,b) + \ldots.
\end{align}
Note that the error term is proportional to $\mathcal{O}(h^2)$, therefore the composed stencil is also second-order accurate.

Lets us now compose two stencils of different order of accuracies in two dimensions. For instance, composition of a fourth-order accurate stencil in the x-direction and second-order accurate stencil in the y-direction,
\begin{align}
  f'(a) &\longrightarrow (\{1, 0, 0, 0, \frac{-1}{30}, 0, \ldots\},\ \beta_x = 1), \\
  f'(b) &\longrightarrow (\{1, 0, \frac{1}{6}, 0, \frac{1}{120}\ldots\},\ \beta_y = 1),
\end{align}
will give us the following,
\begin{align}
    f^{(1,1)}(a,b)\longrightarrow
        \left(\left[
        \begin{array}{cccccc}
        1 & 0 & \frac{1}{6} & 0 & \frac{1}{120} &\cdots \\
        0 & 0 & 0 & 0 & 0 &\cdots \\
        0 & 0 & 0 & 0 & 0 &\cdots \\
        0 & 0 & 0 & 0 & 0 &\cdots \\
        \frac{-1}{30} & 0 & \frac{-1}{180} & 0 & \frac{-1}{3600} &\cdots \\
        \vdots & \vdots & \vdots & \vdots &\vdots & \ddots
        \end{array}
        \right],\ \beta_x = 1,\ \beta_y = 1\right).
\end{align}
In this case the highest order error term would be $\frac{1}{6}h^2\cdot f^{(1,3)} \in \mathcal{O}(h^2)$, demonstrating that the composed stencil is second-order accurate. Both of the examples illustrate that Lemma \ref{Lemma:Accuracy} still holds true for higher dimensional composition.

%
%

\section{Numerical Examples}

In this section the convergence of one- and two-dimensional examples, in addition to the bi-harmonic equation is presented. In addition to verification of the expected order-of-accuracy, we will also determine the coefficient associated with error.

%
%
\subsection{One-dimensional Example}
\label{sec:1dex}

\begin{figure}
    \centering
    \includegraphics[width=0.75\textwidth]{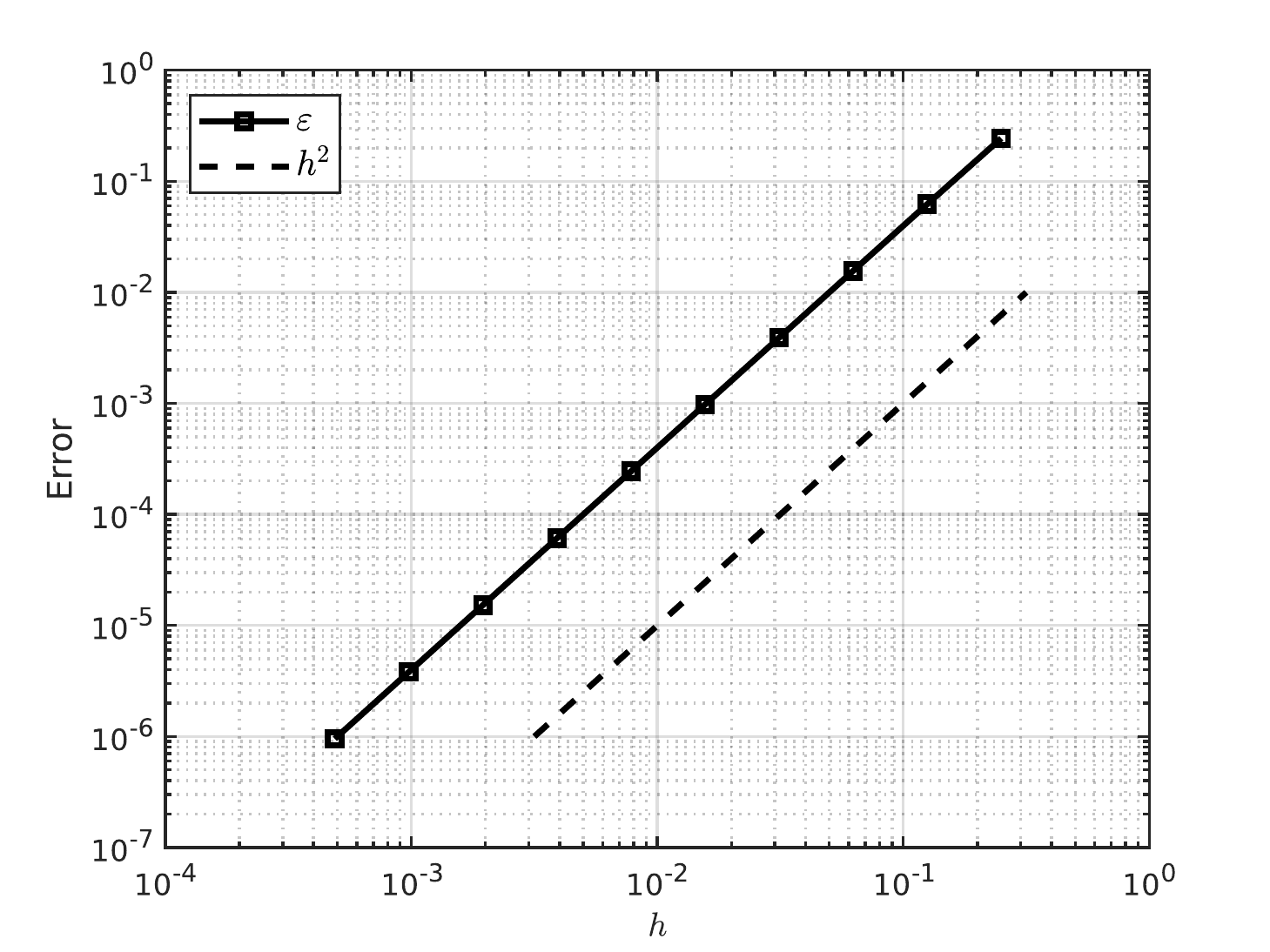}
    \caption{The error evaluated at $x=\pi$ between the third-derivative of $f(x) = \sin(x)\cos(x)$ and a finite-difference stencil obtained via composition of second-order accurate approximations to the first- and second-derivatives.}
    \label{fig1}
\end{figure}

Begin by considering the one-dimensional function $f(x) = \sin(x)\cos(x)$. We will approximate the third-derivative of this function, $f^{(3)}(x) = 4\sin^2(x) -4\cos^2(x)$, via the composition of second-order accurate first-derivative and second derivative stencils given by $\vb{u}=\{1,-1\}$ and $\vb{v}=\{1,0,-1\}$ with weights of $\vb{a}=\{1/h,-1/h\}$ and $\vb{b}=\{1/h^2,-2/h^2,1/h^2\}$, respectively. This results in a series for the first-derivative of
\begin{align}
    f'(x)\longrightarrow(\{1,0,\dfrac{1}{6},0,\dfrac{1}{120},\ldots\},\beta=1)
\end{align}
while the second-derivative series is
\begin{align}
    f''(x)\longrightarrow(\{1,0,\dfrac{1}{12},0,\dfrac{1}{360},\ldots\},\beta=2).
\end{align}
We expect that the composition will result in an $\mathcal{O}(h^2)$-accurate stencil with a leading-order error coefficient of $1/6+1/12=1/4$, or in other words we expect that the error will scale as $\tfrac{1}{4}h^2$, which can be verified from the Taylor-Series of the composition:
\begin{align}
   f^{(3)}(x)\longrightarrow(\{1,0,\dfrac{1}{4},0,\dfrac{1}{40},\ldots\},\beta=3).
   \label{eq:Ex1Taylor}
\end{align}

The error evaluated at $x=\pi$ as a function of grid-spacing $h$ is shown in Fig.~\ref{fig1}. As expected, the rate-of-convergence equals that of the prediction. To verify the coefficient associated with this convergence we fit a line in log-log space:
\begin{align}
    \label{eq:fit}
    \log\varepsilon = p\log h + \log C,
\end{align}
where $\varepsilon$ is the error, $p$ is the calculated order of convergence and $C$ is the leading-order coefficient. Fitting the data results in $p=1.9976\approx 2$, which matches the expected order of convergence and $C\approx 3.9432$. From the Taylor-Series of the approximation, \eqref{eq:Ex1Taylor}, the coefficient of the error should equal $\tfrac{1}{4}f^{(5)}(\pi)=4$, which is very close to the calculated value.

%
%
\subsection{Two-dimensional Example}
\label{sec:2dex}

\begin{figure}
    \centering
    \includegraphics[width=0.75\textwidth]{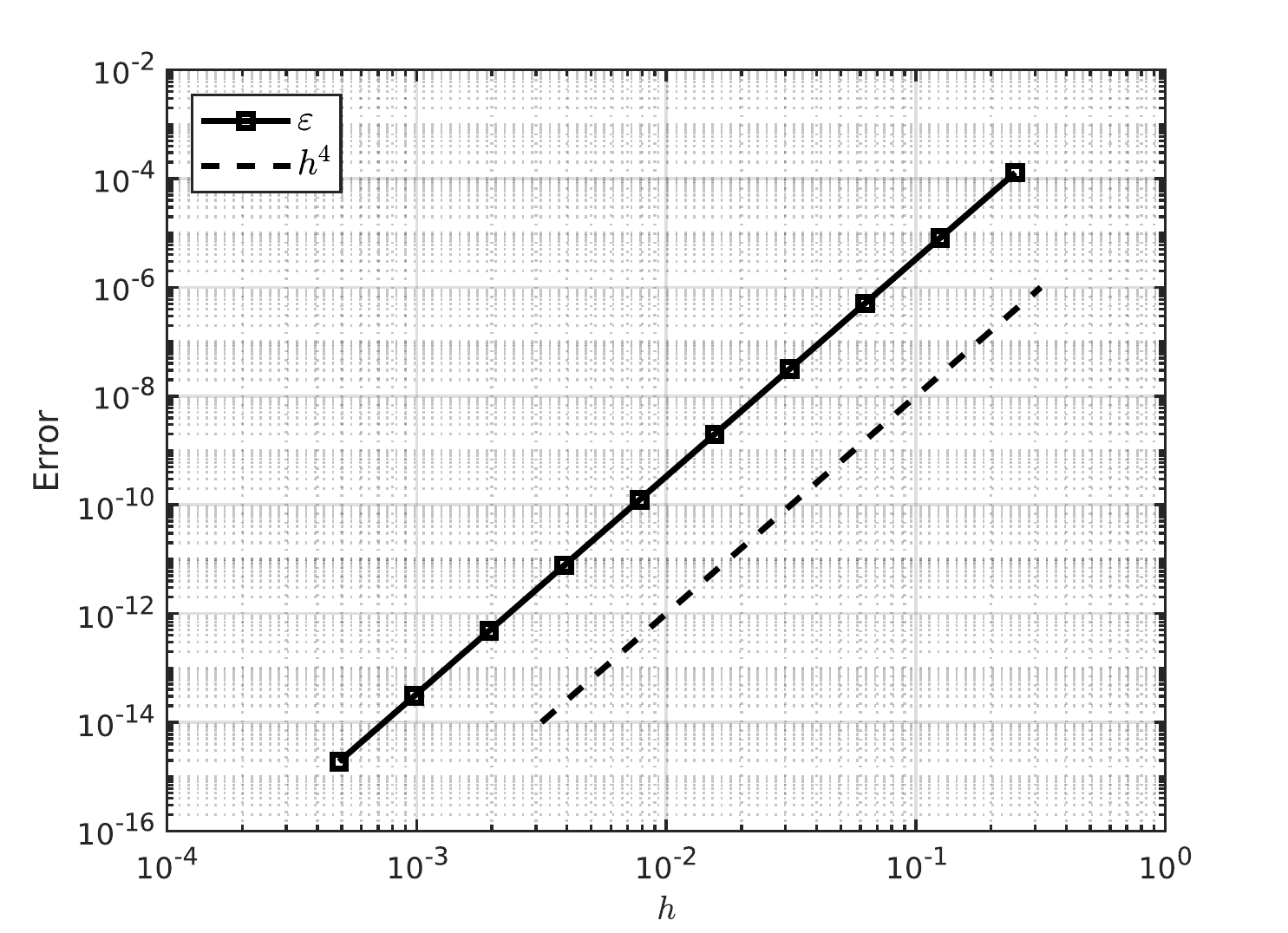}
    \caption{The error of approximating $f^{(4,3)}(x,y)$, where $f(x,y) = \sin(x)\cos(y) + \cos(x)\sin(y)$, obtained by composing two fourth-order accurate second derivative stencils in $x$ and fourth-order accurate first and second derivative stencils in $y$, evaluated at $(x,y) = (2\pi,\pi/3)$. The dashed lines indicate a fourth-order accurate solution.}
    \label{fig2}
\end{figure}

Let us now consider a two-dimensional function, \linebreak $f(x,y) = \sin(x)\cos(y) + \cos(x)\sin(y)$ where we are interested approximating $f^{4,3}(x)$ with fourth-order accuracy. As before we will verify the order of accuracy and the associated coefficient. To build the overall stencil we will be using multiple compositions. First, a centered, fourth-order accurate discretization of the second-derivative is composed with itself to obtain a fourth-order accurate representation of the fourth-derivative:
\begin{align}
    f^{(4)}(\vb{x})\longrightarrow (\{1,0,0,0,-\dfrac{1}{45},0,-\dfrac{1}{504},\ldots\},\beta=4).
\end{align}
Second, a centered fourth-order accurate discretization of the first-derivative is composed with the fourth-order accurate second derivative approximation:
\begin{align}
    f^{(3)}(\vb{x})\longrightarrow (\{1,0,0,0,-\dfrac{2}{45},0,-\dfrac{5}{1008},\ldots\},\beta=3).
\end{align}
Composing the fourth-derivative in the $x$-direction with the third-derivative in the $y$-direction results in
\begin{align}
    f^{(4,3)}(\vb{x})\longrightarrow
        \left(\left[
        \begin{array}{cccccc}
        1 & 0 & 0 & 0 & -\frac{2}{45} &\cdots \\
        0 & 0 & 0 & 0 & 0 &\cdots \\
        0 & 0 & 0 & 0 & 0 &\cdots \\
        0 & 0 & 0 & 0 & 0 &\cdots \\
        -\frac{1}{45} & 0 & 0 & 0 & \frac{2}{2025} &\cdots \\
        \vdots & \vdots & \vdots & \vdots &\vdots & \ddots
        \end{array}
        \right],\ \beta_x = 4,\ \beta_y = 3\right),
\end{align}
which corresponds to $f^{4,3}(\vb{x})+h^4\left(-\frac{1}{45} f^{(8,3)}(\vb{x})-\frac{2}{45} f^{(4,7)}(\vb{x})\right)+\cdots$.

The error between the exact solution, $f^{(4,3)}(x, y) = \sin(x)\sin(y) - \cos(x)\cos(y)$, and the approximation computed via the composed finite-difference stencil at location $\vb{x}=(2\pi,\pi/3)$ is shown in Fig.~\ref{fig2}. Following \eqref{eq:fit} the calculated convergence rate is 3.9997 while $C=3.324\times 10^{-2}$, close to the expected value of
$-\left( f^{(8,3)}(\vb{x})+ 2f^{(4,7)}(\vb{x})\right)/45=1/30$.

%
%

\subsection{Biharmonic Equation}
\label{sec:biharmonic}

\begin{figure}
    \centering
    \includegraphics[width=0.75\textwidth]{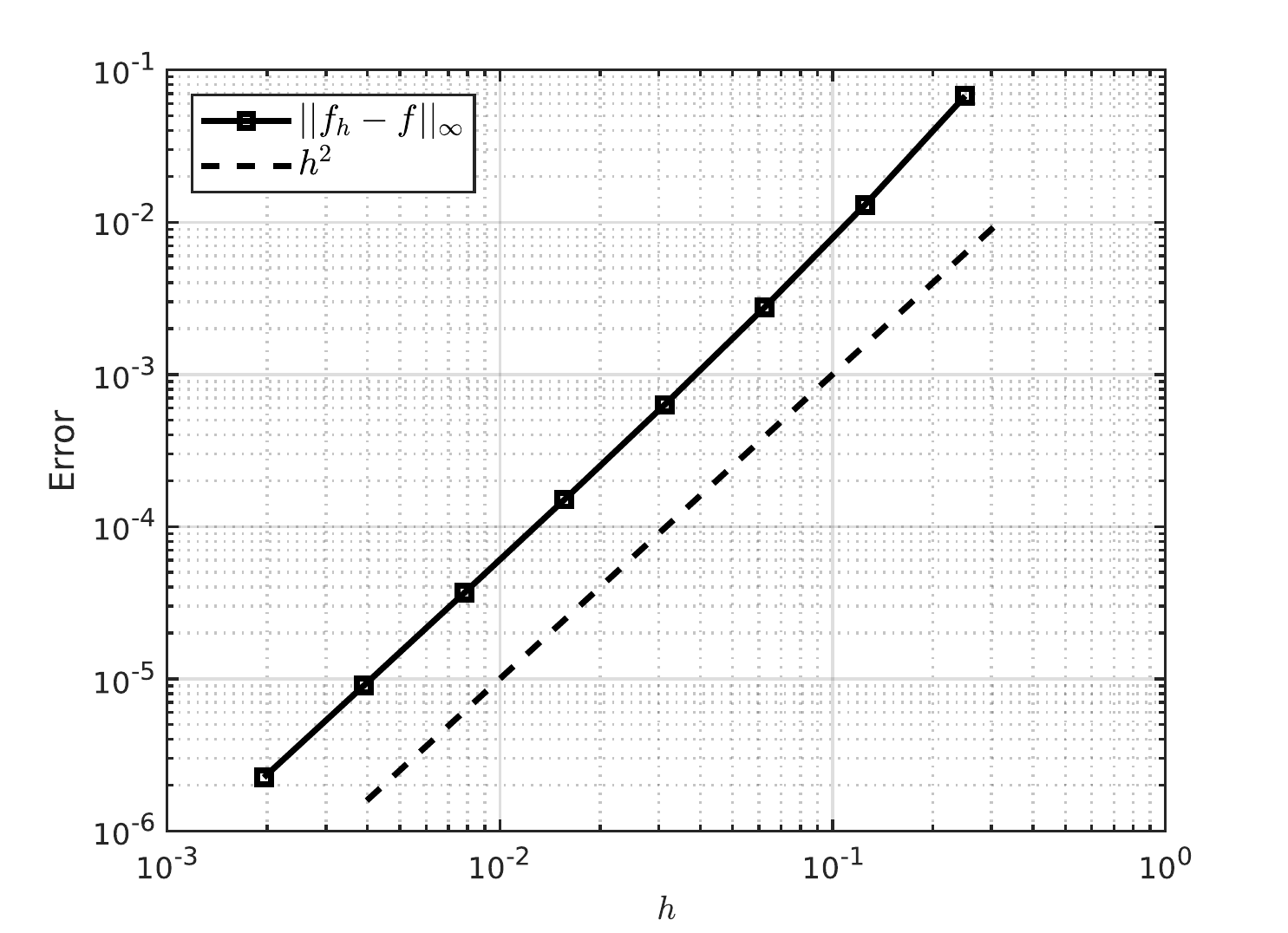}
    \caption{Convergence test of biharmonic boundary value problem. Show is the $l_\infty$-norm of the error in the domain versus grid spacing $h$ for $\Delta(\Delta f) = 4\pi^4 \sin(\pi x) \sin(\pi y)$ obtained by composition of second-order accurate stencils evaluated on a unit square domain $\{0\leq x\leq 1, 0\leq y\leq 1\}$ with appropriate boundary conditions. The dashed line indicates slope for a second-order accurate solution, matching the expected result.}
    \label{fig3}
\end{figure}

Next, we consider the solution of a linear system arising from the discretization of a high-order differential equation. Specifically, we consider solutions of the biharmonic equation, which is a fourth-order linear partial differential equation with applications in various areas of mechanics, including the theory of elasticity and flow of viscous fluids~\cite{APS}. In two-dimensions, the biharmonic of a function $f(x,y)$ can be written as
\begin{equation}\label{biharm}
    \Delta(\Delta f)=\frac{\partial^4 f}{\partial x^4} + 2\frac{\partial^4 f}{\partial x^2\partial y^2} + \frac{\partial^4 f}{\partial y^4} = g(x,y),
\end{equation}
with appropriate boundary conditions on a bounded domain~\cite{ford} and where $g(x,y)$ is the problem-specific forcing function.

For our numerical experiment we consider a simply supported rectangular plate with sides of unit length $\{0\leq x\leq 1, 0\leq y\leq 1\}$ and a given solution of $f(x,y) = \sin(\pi x) \sin(\pi y)$. This results in boundary conditions of~\cite{arad}
\begin{subequations}\label{bc}
    \begin{align}
        f &= 0, \frac{\partial^2 f}{\partial x^2} = 0\quad \text{for}\ x = 0\ \text{and}\ x = 1, \textnormal{and} \\
        f &= 0, \frac{\partial^2 f}{\partial y^2} = 0\quad \text{for}\ y = 0\ \text{and}\ y = 1
    \end{align}
\end{subequations}
and a forcing function of
\begin{align}
    g(x,y) = 4\pi^4\sin(\pi x)\sin(\pi y).
\end{align}

There are two ways we can make use of composition to obtain the stencil for the biharmonic equation. In the first method we can use composition to discretize the middle equation of \eqref{biharm}. While this is straight-forward to accomplish, it requires that the spatial dimension of the underlying grid be taken into account as there will be additional terms in the $z$-direction if this is a three-dimensional problem instead of a two-dimensional one. An alternative is to create a single Laplacian stencil and compose this stencil with itself, \textit{i.e.} using stencil composition to compute the left-hand side equation of \eqref{biharm} directly. From an implementation point-of-view this second approach is much more attractive as any dimension-dependence will already be taken into account when forming the Laplacian stencil. Additionally, as both methods will result in the same stencil, so the second approach is the one used here.

The error results in the $l_\infty$-norm for the given problem are shown in Fig.~\ref{fig3} where the discretization of the Laplacian was achieved using second-order accurate stencils. Based on this, we expect that the discretization of the biharmonic equation will maintain this second-order accuracy, which is verified by calculating the rate-of-convergence of the test.

\section{Cahn-Hilliard Phase-Field Model: A Benchmark Problem}
\label{sec:cahn}
In this section we look at a benchmark phase-field problem involving spinodal decomposition in a binary system which uses the Cahn-Hilliard equation for time evolution. This example is inspired from the first benchmark problem in~\cite{spinodal}. For more information on spinodal decomposition, Cahn-Hilliard equations, and other relevant benchmark problems please refer to~\cite{cahn, spinodal}. 

For this example, various studies and analyses are done, such as temporal convergence test, scaling analysis as well as investigating matrix properties and stability. A two dimensional (2D) and a three-dimensional (3D) computational domain have been used for the simulation, and the initial conditions are chosen accordingly. The following experiment has been conducted using \petscfd, a finite-difference discretization class in \petsc~\cite{petsc}. \petsc is a library of data structures and routines that allows implementation of large-scale application codes on parallel as well as serial computers. For discretization, \petsc primarily uses \code{PetscFE} or \code{PetscFV} for finite element and finite volume based discretizations respectively. Therefore, \petscfd, which leverages the concept of \scomp, adds the support for finite difference based discretizations in \petsc for solving PDEs. Since \petscfd is a class in \petsc, a large amount of software complexity is avoided. For more information regarding \petscfd and its usage, refer to~\cite{mishra}.

\subsection{Computational Domain, Free Energy and Dynamics}
\label{sec:domain}
For spinodal decomposition in a binary system, a single order parameter, $c$, is evolved, which describes the atomic fraction of solute~\cite{spinodal}. The free energy of the system, $F$, is expressed as~\cite{cahn}
\begin{equation}
    F(c) = \int\left(f_{chem}(c) + \frac{\kappa}{2}|\nabla c|^2\right) dV,
\end{equation}
where $f_{chem}$ is the chemical free energy density and $\kappa = 2$ is the gradient energy coefficient.  $f_{chem}$ has a simple polynomial form,
\begin{equation}
    f_{chem}(c) = \rho(c-c_\alpha)^2(c-c_\beta)^2,
\end{equation}
such that $f_{chem}$ is a symmetric double-well with minima at $c_\alpha = 0.3$ and $c_\beta = 0.7$, while $\rho = 5$ controls the height of the double-well barrier.

The evolution of $c$ is given by the Cahn-Hilliard equation~\cite{cahn}
\begin{equation}\label{eq:cahn}
    \frac{\partial c}{\partial t} = \nabla\cdot \left\{M\nabla\left(\frac{\partial f_{chem}}{\partial c} - \kappa\Delta c\right)\right\},
\end{equation}
where $M = 5$ is the mobility of the solute.

For this problem, we consider a two-dimensional and a three-dimensional computational domain. The two-dimensional domain is of size $200\times200$ units, centered at $x = y = 100$, and the three-dimensional domain is of size $64\times64\times64$ units, centered at $x = y = 32$. For both 2D and 3D, periodic boundary conditions are assumed on all boundaries. The initial conditions for this problem are chosen such that the average value of $c$ over the computational domain is approximately 0.5. The initial value of $c$ for 2D and 3D domains is given by
\begin{align}
    c(x,y) = c_0 &+ \epsilon[\cos(0.105x)\cos(0.11y) \nonumber\\
    &+ [\cos(0.13x)\cos(0.087y)]^2 \\
    &+ \cos(0.025x-0.15y)\times \cos(0.07x-0.02y)],\nonumber
\end{align}
\begin{align}
    c(x,y,z) = c_0 &+ \epsilon[\cos(0.105x)\cos(0.11y)\cos(0.11z) \nonumber \\
    &+ [\cos(0.13x)\cos(0.087y)\cos(0.1z)]^2 \\
    &+ \cos(0.025x-0.15y-0.1z) \times \cos(0.07x-0.02y+0.01z)],\nonumber
\end{align}
where $c_0 = 0.5$ and $\epsilon = 0.01$.
The system is discretized by composing the Laplacian with itself. The linear portion, $\Delta^2 c$, is treated implicitly while the non-linear portion is treated explicitly. 

\begin{figure}
\begin{subfigure}{0.49\textwidth}
\includegraphics[width=\linewidth]{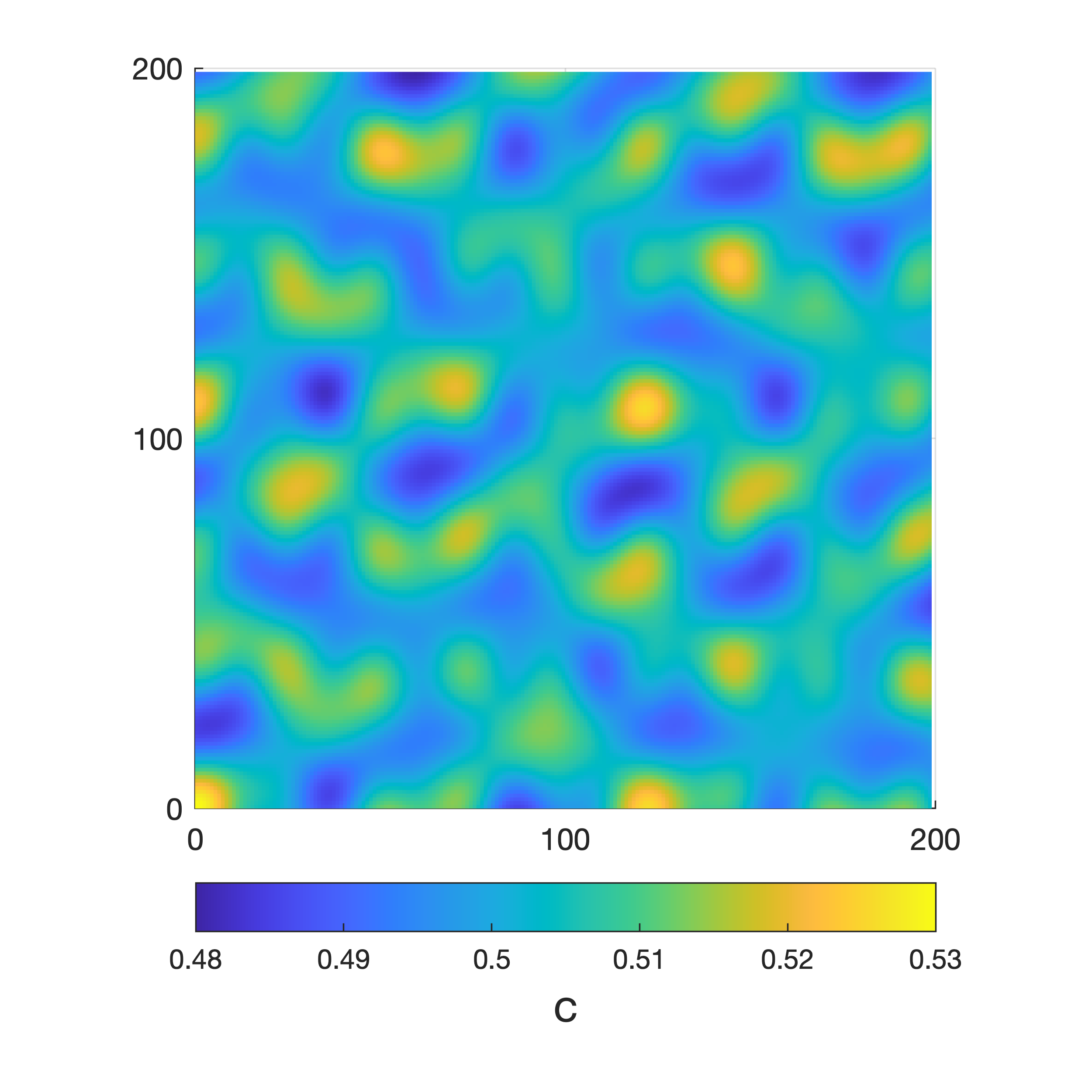}
\caption{2D: $200\times200$}
\end{subfigure}\hspace*{\fill}
\begin{subfigure}{0.49\textwidth}
\includegraphics[width=\linewidth]{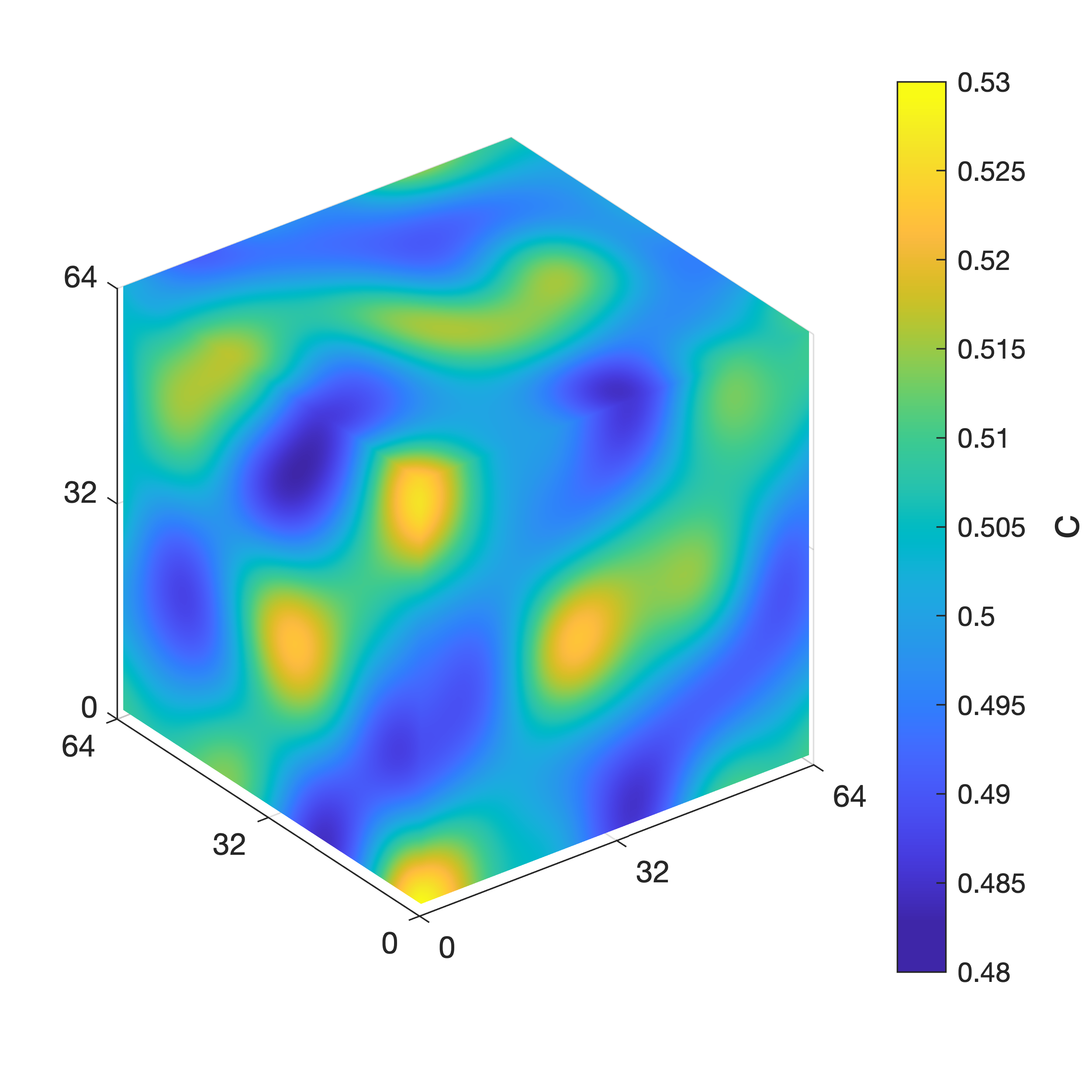}
\caption{3D: $64\times64\times64$}
\end{subfigure}
\caption{The computational domains and initial conditions for the spinodal decomposition benchmark problem in (a) 2D, and (b) 3D.} \label{fig:cahninit}
\end{figure}

The computational domains and initial conditions of the two-dimensional and three-dimensional problems are shown in Fig. \ref{fig:cahninit}.

\subsection{Micro-structural and Free Energy Evolution}

\begin{figure}
\begin{subfigure}{0.49\textwidth}
\includegraphics[width=\linewidth]{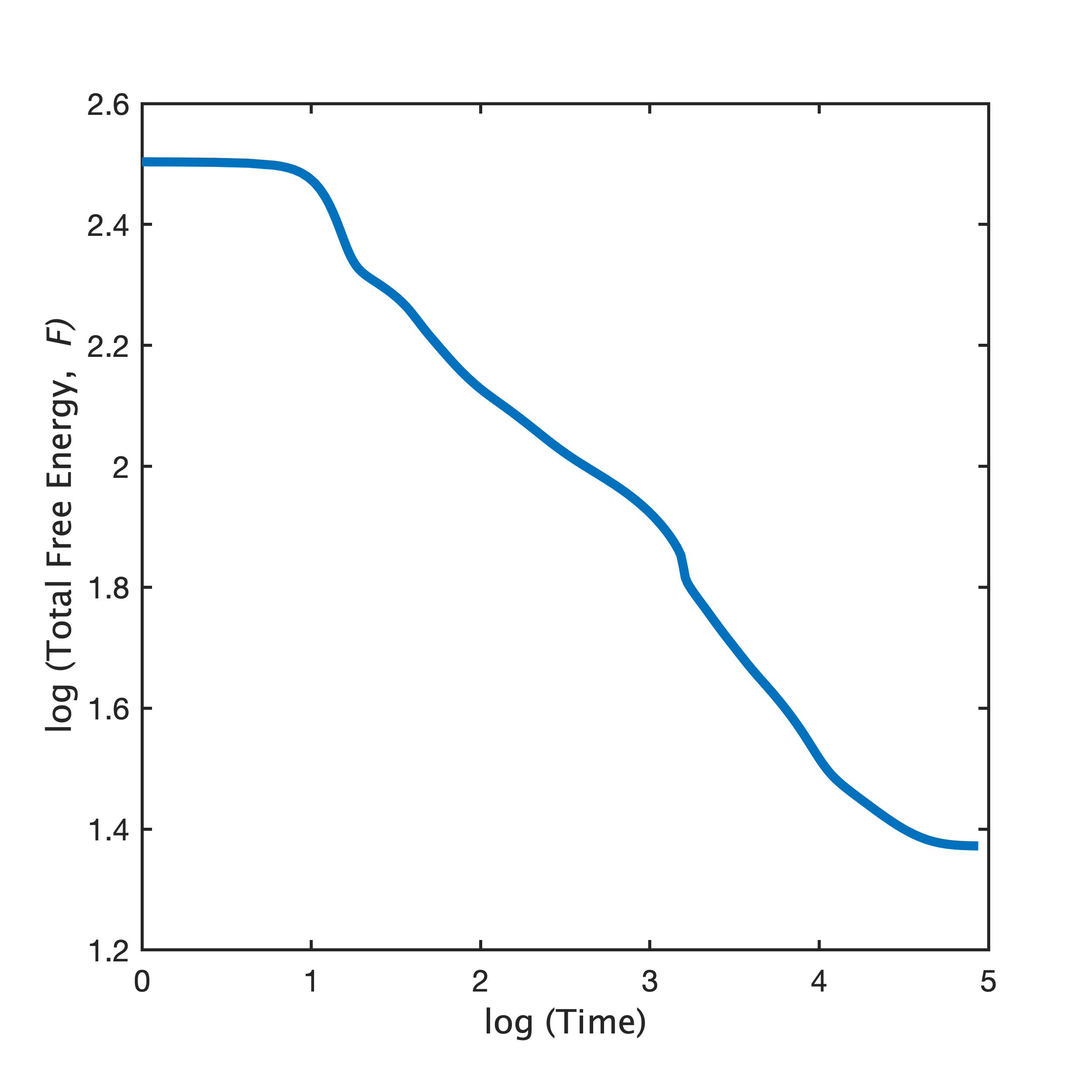}
\end{subfigure}\hspace*{\fill}
\begin{subfigure}{0.49\textwidth}
\includegraphics[width=\linewidth]{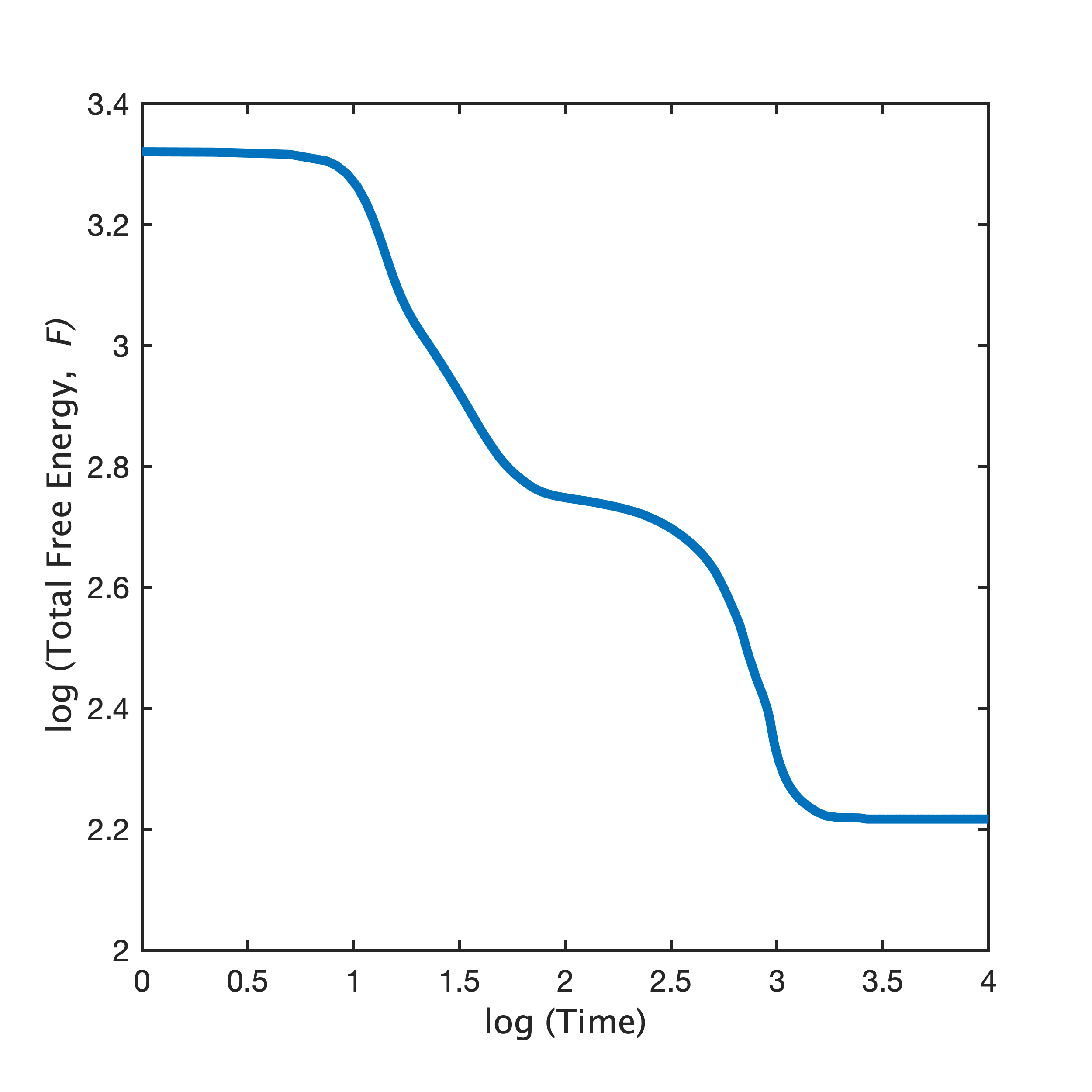}
\end{subfigure}
\caption{The total free energy evolution of the spinodal decomposition benchmark problem in 2D (left), and 3D (right).} \label{fig:cahnF}
\end{figure}

For this benchmark problem, the total free energy of the system and microstructural snapshots are chosen as the metrics to analyze the simulation results. Figure~\ref{fig:cahnF} shows the total free energy evolution of the spinodal decomposition problem. The total free energy decreases rapidly and eventually asymptotically approaches the local energy minimum of the system.

\begin{figure}
\begin{subfigure}{0.49\textwidth}
\includegraphics[width=\linewidth]{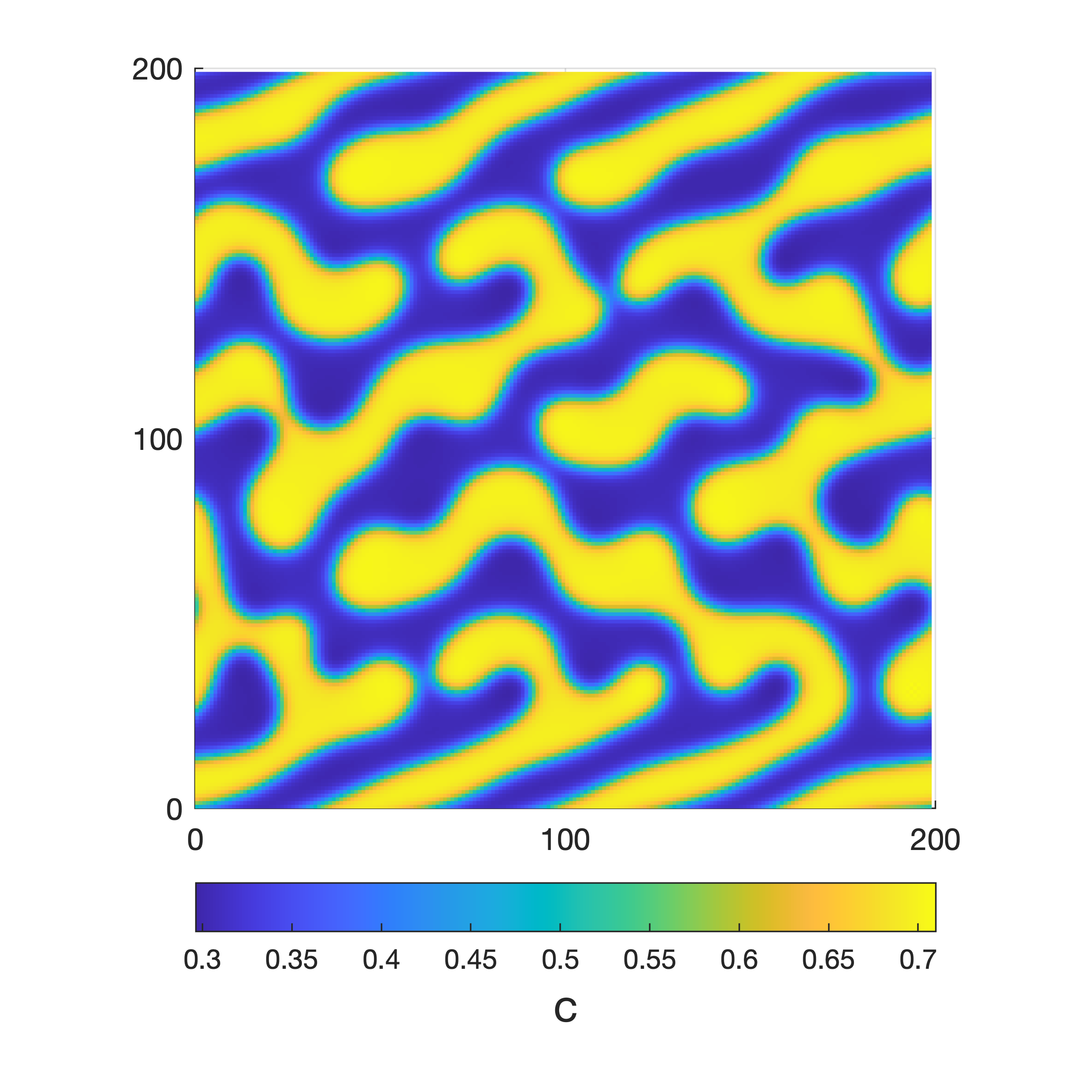}
\caption{$t = 10^2$}
\end{subfigure}\hspace*{\fill}
\begin{subfigure}{0.49\textwidth}
\includegraphics[width=\linewidth]{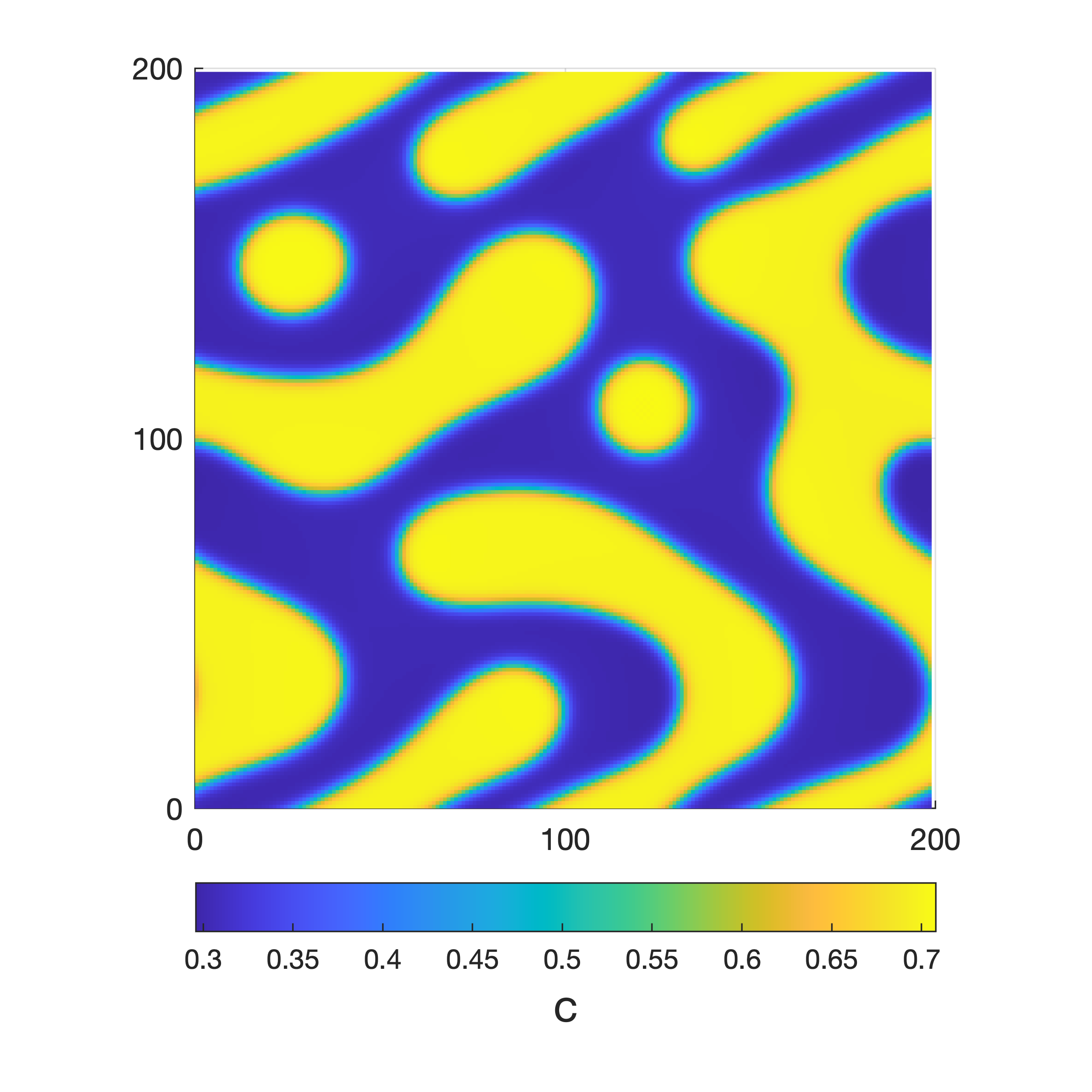}
\caption{$t = 10^3$}
\end{subfigure}
\begin{subfigure}{0.49\textwidth}
\includegraphics[width=\linewidth]{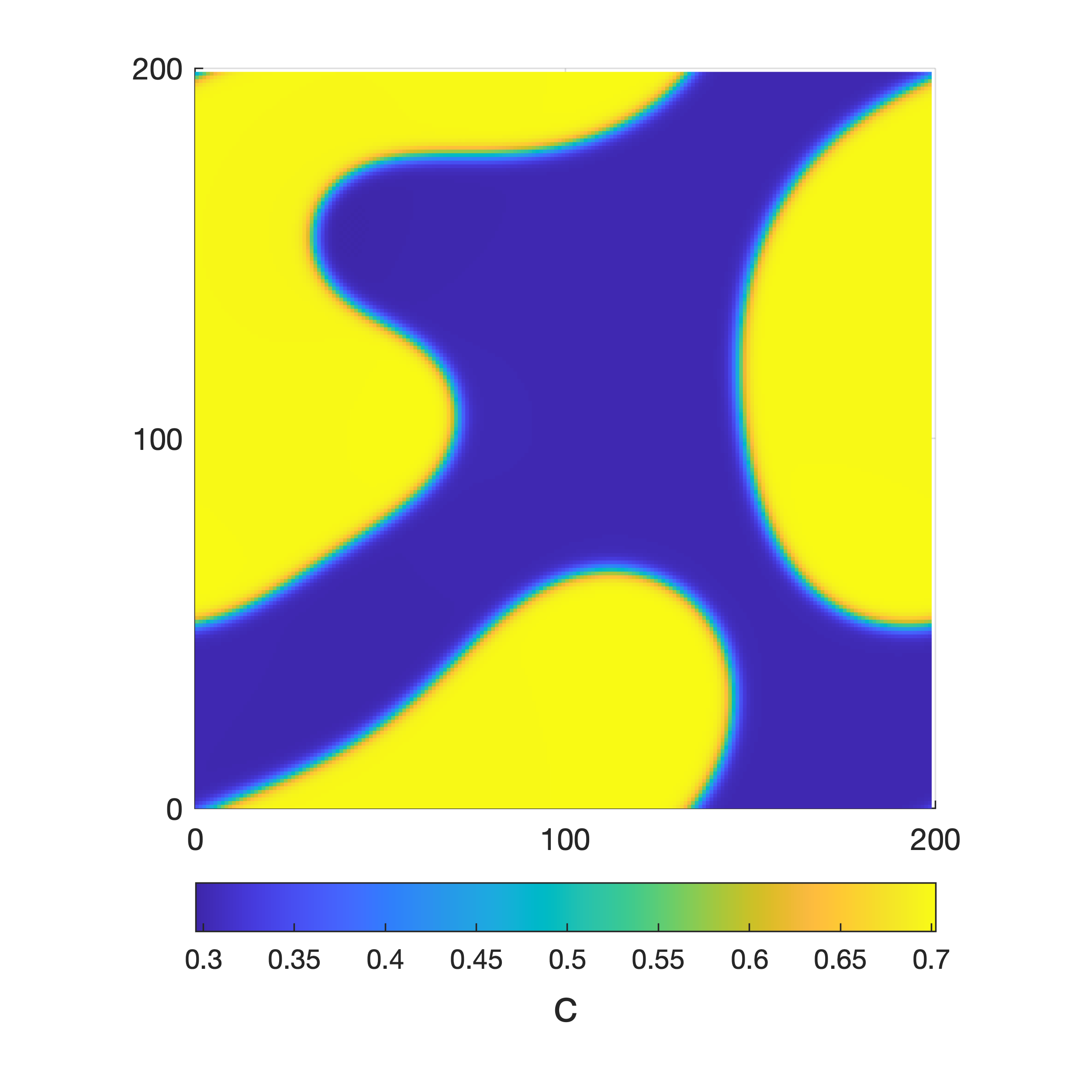}
\caption{$t = 10^4$}
\end{subfigure}\hspace*{\fill}
\begin{subfigure}{0.49\textwidth}
\includegraphics[width=\linewidth]{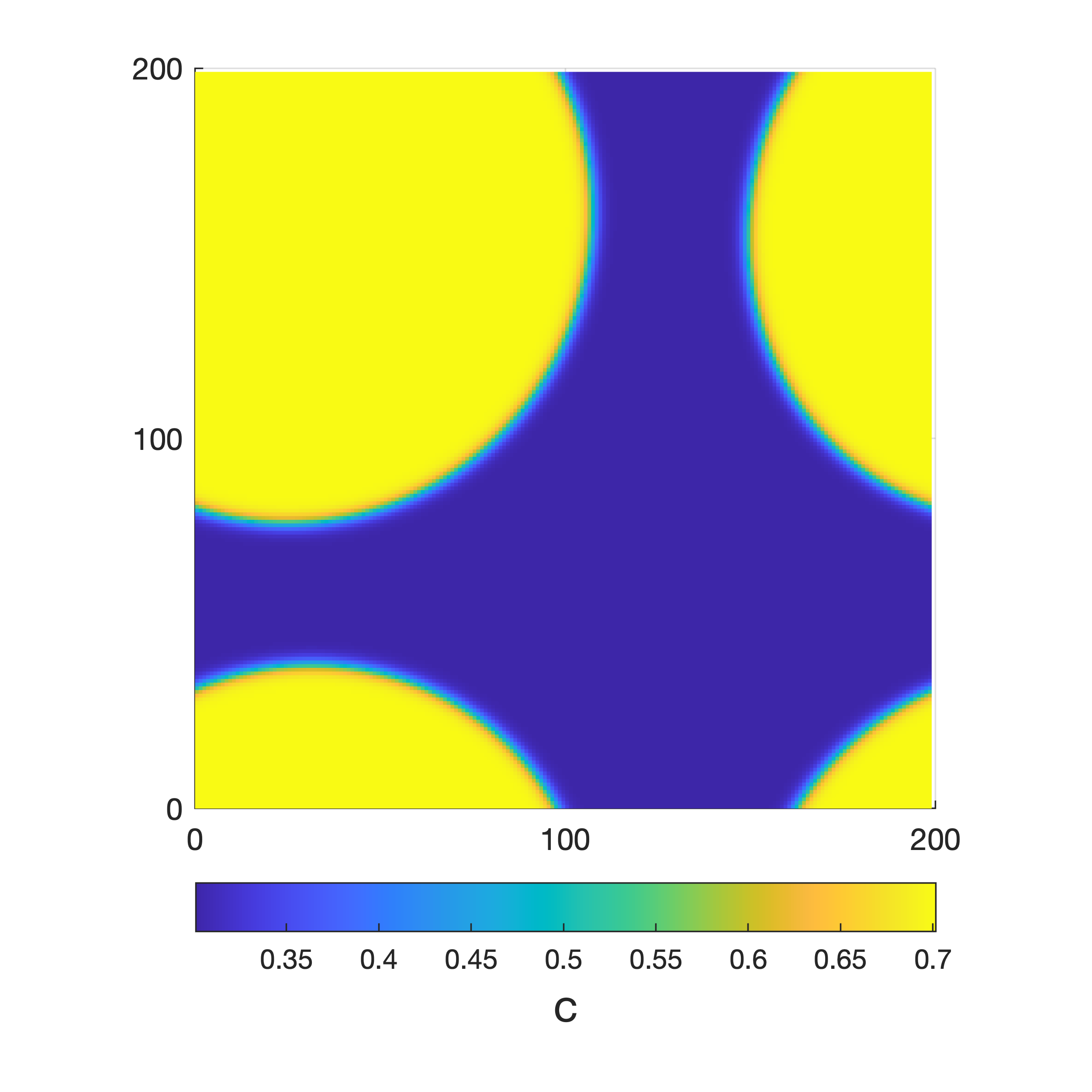}
\caption{$t = 8.7\times10^4$}
\end{subfigure}
\caption{Snapshots of the micro-structure evolution for spinodal decomposition at different time steps in 2D.} \label{fig:cahn2d}
\end{figure}

Figure~\ref{fig:cahn2d} presents the microstructure snapshots for spinodal decomposition of the two-dimensional system at $t = 10^2, 10^3, 10^4$ and $8.7\times10^4$ using at time step of $\Delta t=0.05$. Differences at various times are discernible. Microstructural evolution reaches the lowest energy state beyond $t = 10^5$, but it can be clearly seen from the snapshots that the structure is approaching equilibrium.

\begin{figure}
\begin{subfigure}{0.49\textwidth}
\includegraphics[width=\linewidth]{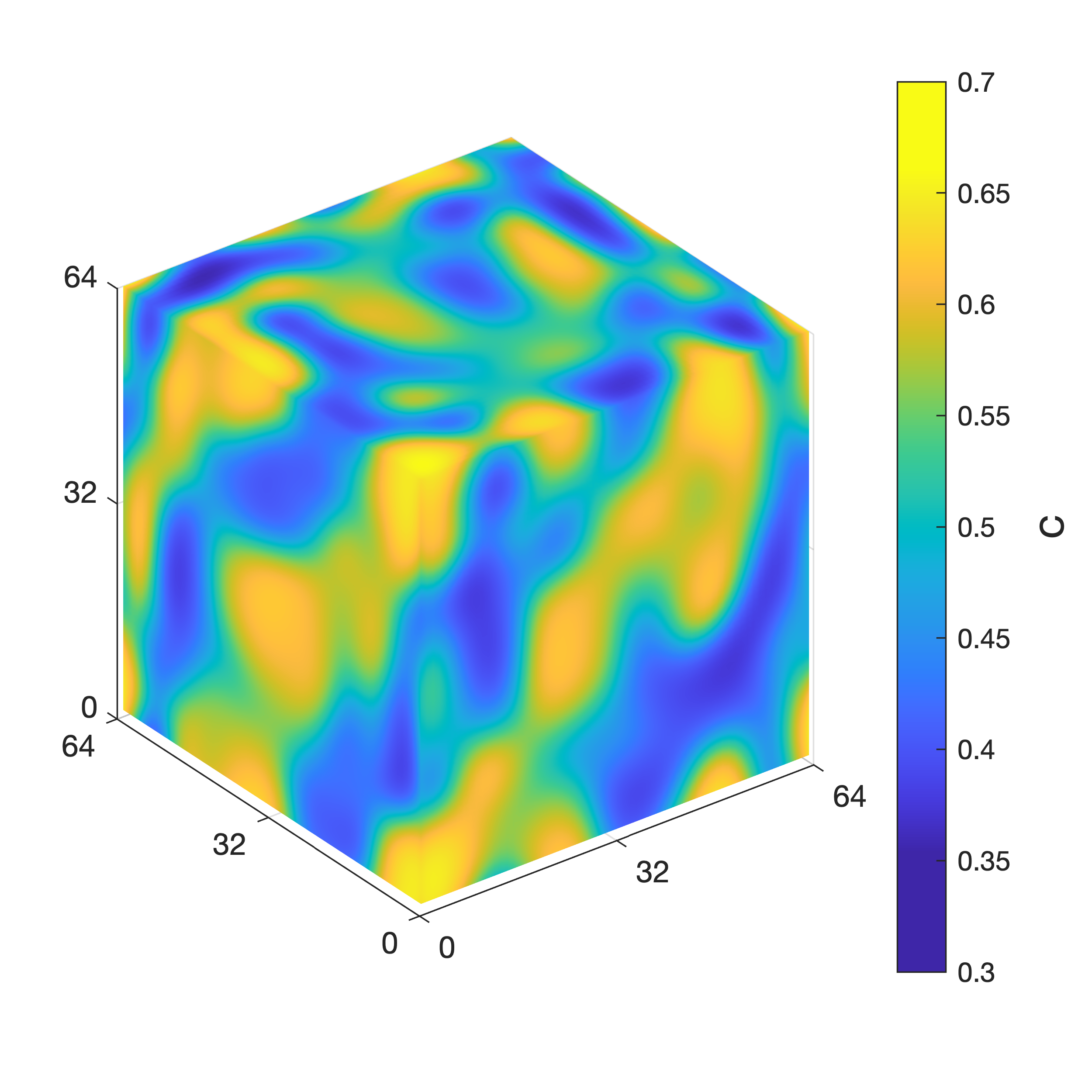}
\caption{$t = 10$}
\end{subfigure}\hspace*{\fill}
\begin{subfigure}{0.49\textwidth}
\includegraphics[width=\linewidth]{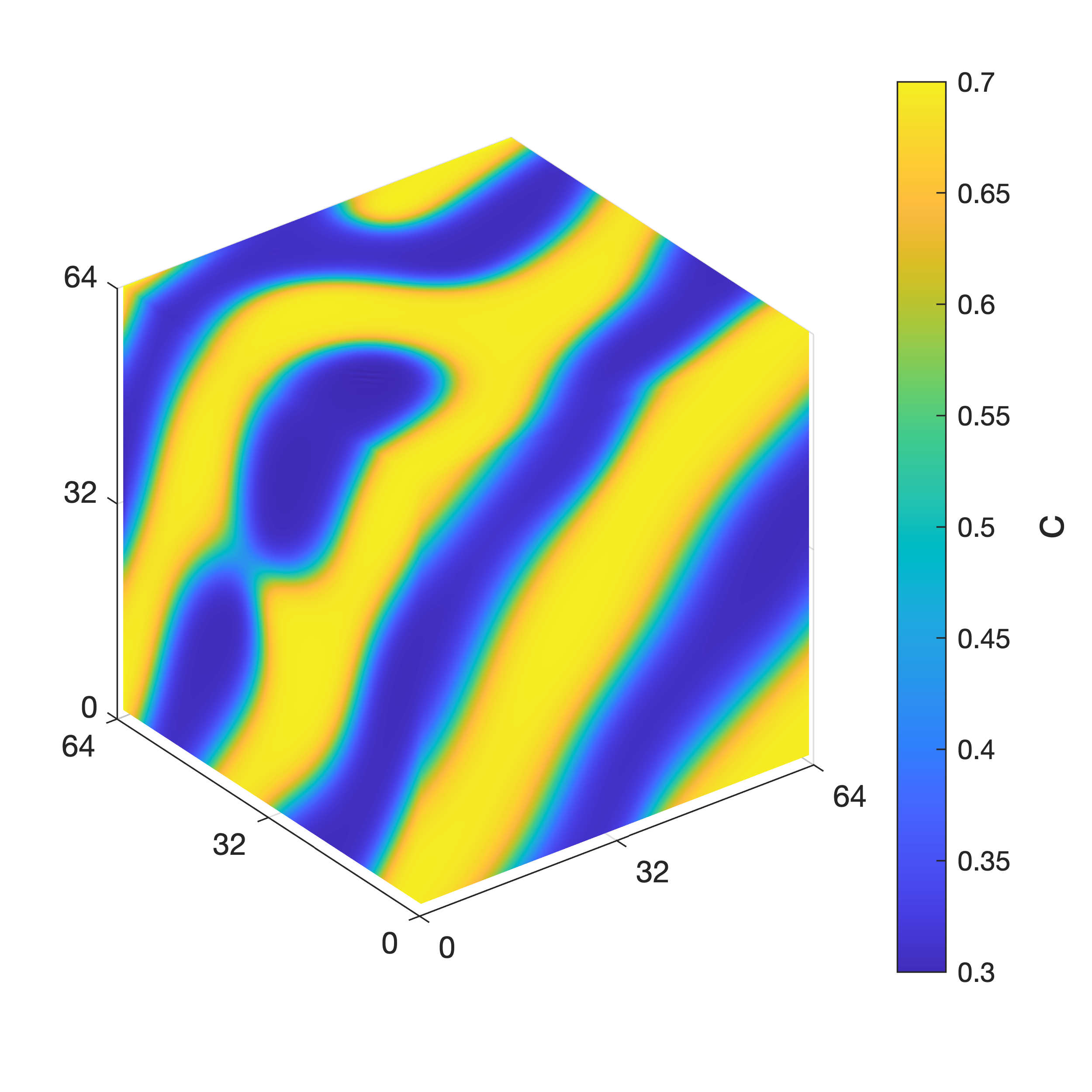}
\caption{$t = 10^2$}
\end{subfigure}
\begin{subfigure}{0.49\textwidth}
\includegraphics[width=\linewidth]{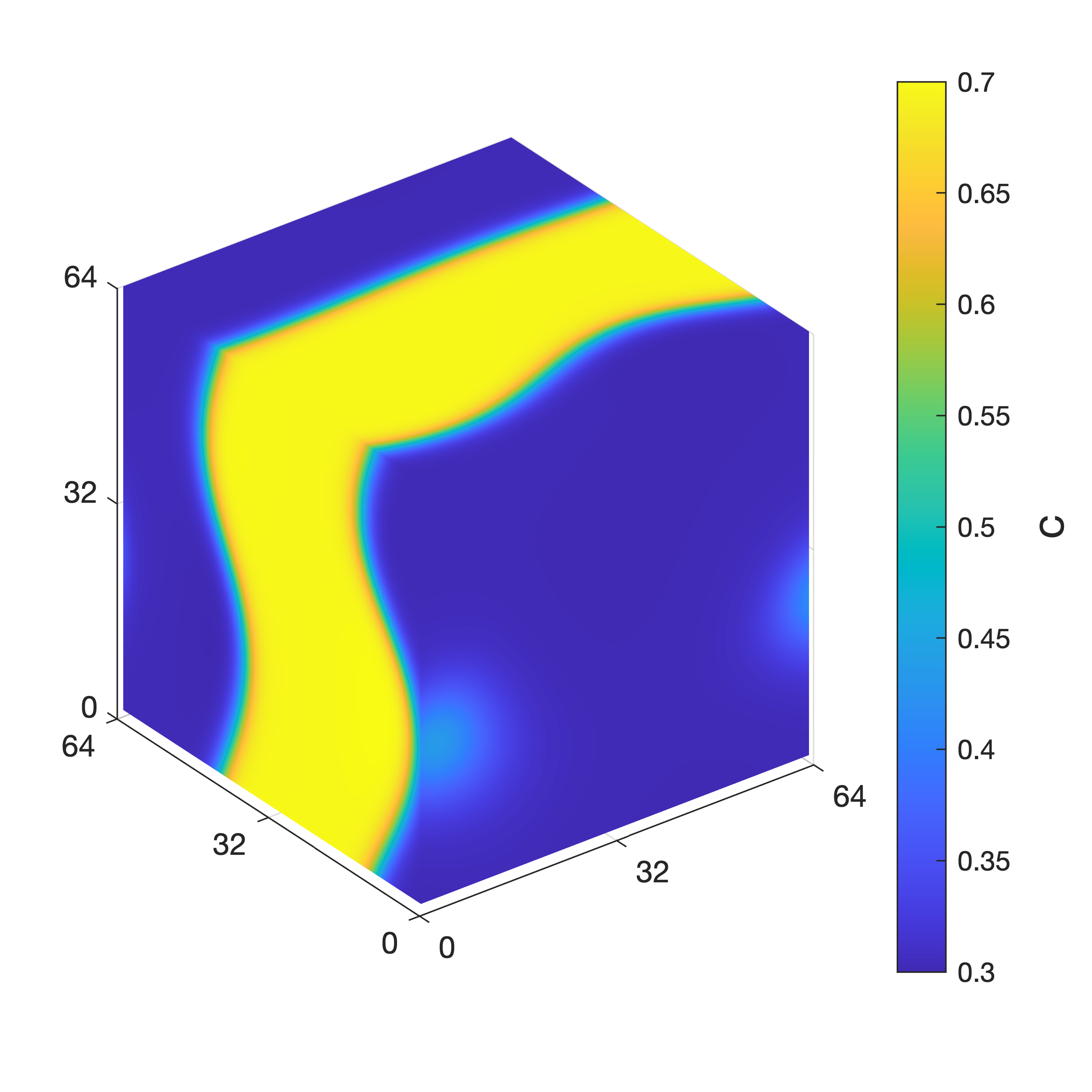}
\caption{$t = 10^3$}
\end{subfigure}\hspace*{\fill}
\begin{subfigure}{0.49\textwidth}
\includegraphics[width=\linewidth]{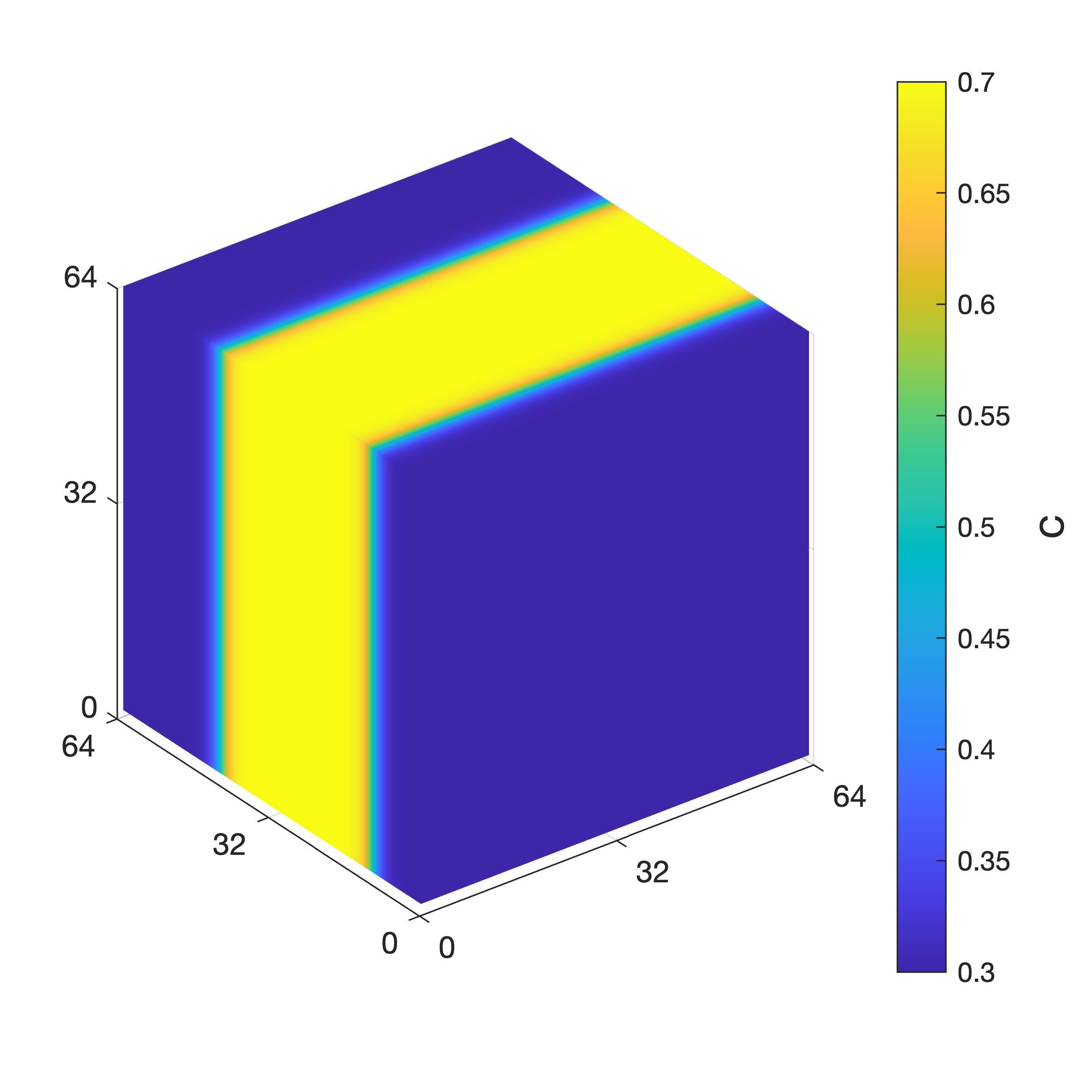}
\caption{$t = 10^4$}
\end{subfigure}
\caption{Snapshots of the micro-structure evolution for spinodal decomposition at different time steps in 3D.} \label{fig:cahn3d}
\end{figure}

Figure \ref{fig:cahn3d} presents the microstructure snapshots for spinodal decomposition at $t = 10$, $10^2$, $10^3$ and $10^4$ for the three-dimensional system using a time step of $\Delta t=0.25$. Differences at various times are quite discernible, heading towards equilibrium at a faster rate. Microstructural evolution reaches the equilibrium somewhere around $t = 5000$, when the total free energy reaches the lowest energy state (seen in Fig. \ref{fig:cahnF}).

\subsection{Temporal Convergence}
Convergence analysis has been performed for the all the prior examples mentioned in the last section. But those examples were only space dependent problems and hence that type of convergence is classified as spatial convergence. For time and space dependent problem, such as this one, the estimation of error in time and space is often independent, and hence a temporal convergence study is done. 

Often times, it is difficult to have an exact solution for time-dependent PDEs. In such cases, one can find the solution for a very low time step, $\Delta t$, and use that as the exact solution to calculate error of the solution obtained from various time steps. Plotting the logarithm of this error against the logarithm of time step, $\Delta t$, would produce a line of slope equal to the temporal convergence rate. Several types of time discretization schemes are available, but for this example, implicit-explicit (IMEX) time discretization scheme has been used.

\begin{figure}
\begin{subfigure}{0.49\textwidth}
\includegraphics[width=\linewidth]{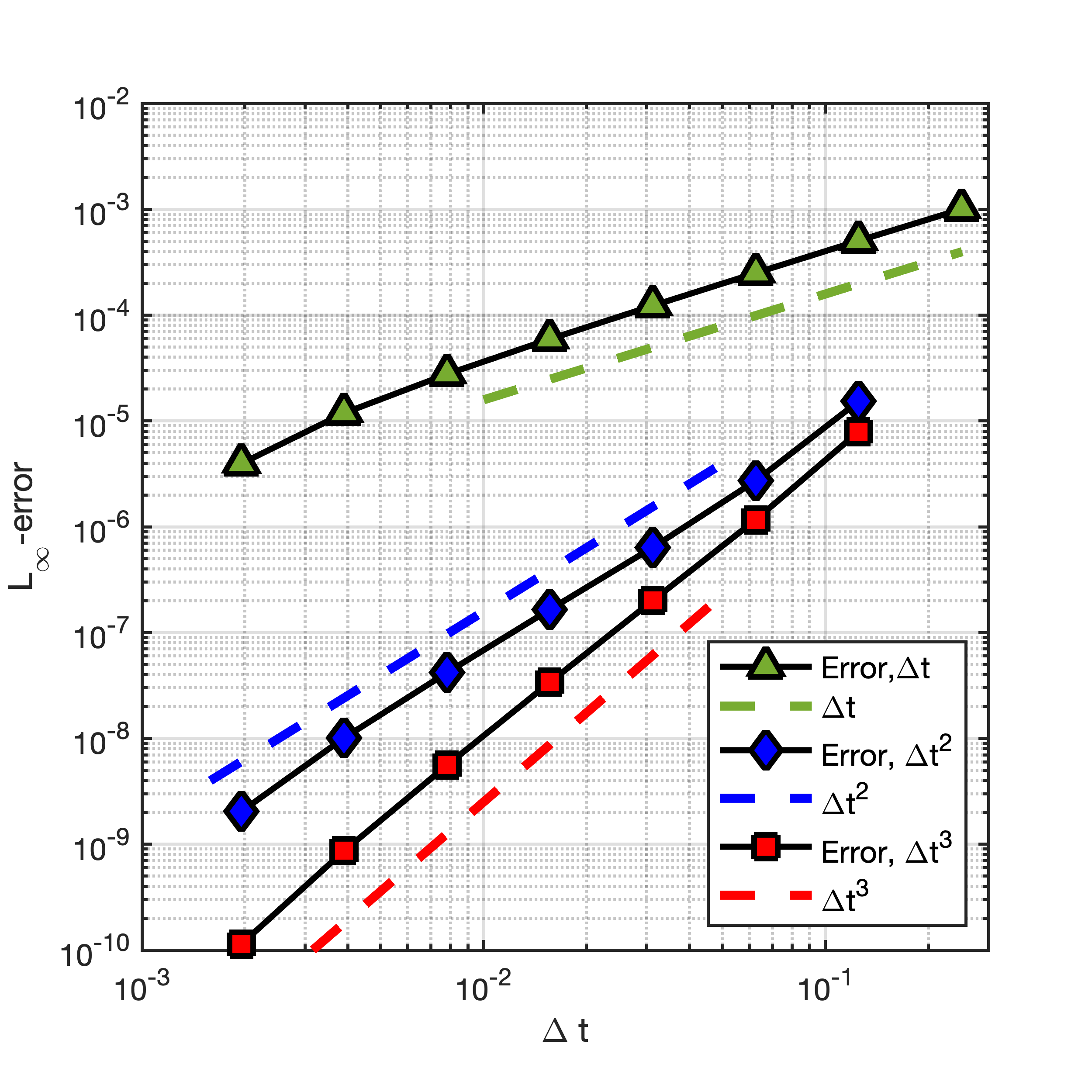}
\caption{2D}
\end{subfigure}\hspace*{\fill}
\begin{subfigure}{0.49\textwidth}
\includegraphics[width=\linewidth]{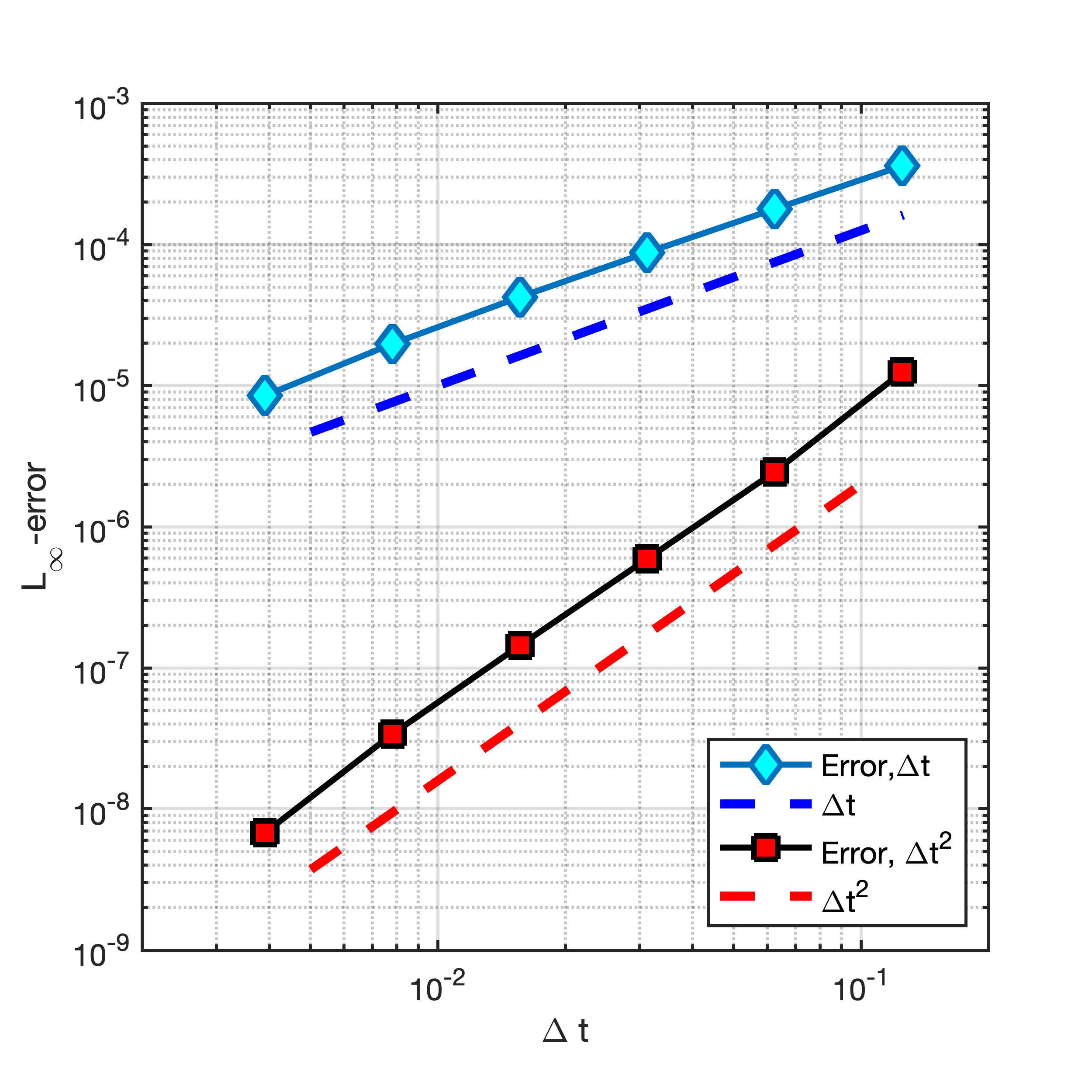}
\caption{3D}
\end{subfigure}
\caption{Temporal Convergence. Comparison of error in the solution when using IMEX time-stepping schemes of $\mathcal{O}(\Delta t)$, $\mathcal{O}(\Delta t^2)$, $\mathcal{O}(\Delta t^3)$ in 2D (left) and $\mathcal{O}(\Delta t)$, $\mathcal{O}(\Delta t^2)$ in 3D (right) for temporal discretization of the Cahn-Hilliard equation .} \label{fig:cahn_conv_dt}
\end{figure}

Figure \ref{fig:cahn_conv_dt} shows the temporal convergence diagram for the spinodal decomposition problem in 2D as well as 3D, with all results compared to the one calculated using $\Delta t=9.7656\times 10^{-4}$ in 2D and $\Delta t = 1.953125\times 10^{-3}$ in 3D. The problem was run multiple times until a final time of t = 10, each time with a different time step ($\Delta t$), for a fixed spacial resolution at h = 1. IMEX time-stepping schemes of orders one, two and three were used to solve the problem in two dimensions and of orders one and two in three dimensions.

In 2D (Fig. \ref{fig:cahn_conv_dt} (a)), for the scheme with order one, the solution initially converges at the expected rate, but a slight deviation can be observed when the time step is very low. For the schemes orders two and three, the solution can be seen converging at the expected rate. In 3D (Fig. \ref{fig:cahn_conv_dt} (b)), for both the methods, of $\mathcal{O}(\Delta t)$ and $\mathcal{O}(\Delta t^2)$, the solution converges the an expected rate with no deviations or tailing off at any point.

\subsection{Static-Scaling}
Traditionally, there has always been two kinds of scaling analysis, strong-scaling and weak-scaling. But from both strong-scaling or weak-scaling plots, it is unclear how a given machine or algorithm will handle a variety of workloads~\cite{tas}. This leads to concept of \emph{static-scaling}~\cite{weakscaling, tas}, where the problem size is increased for a fixed parallelism. For a detailed explanation on static-scaling, please refer to~\cite{staticscaling}. 

In this type of scaling analysis, the time to solution is plotted against the total degrees of freedom (DoF) solved per second for a variety of problem sizes. Optimal scaling will be indicated by a horizontal line as the problem size is increased, which is the middle region of the plot. On the left, there might sometimes be delay in reaching optimal scaling which may be due to the fact that problem size is too small for a given number of MPI processes, and hence the large communication to computation ratios (strong-scaling effects). Sometimes, one might also see some tailing off to the right of the static-scaling plot. This may be caused by how the memory is allocated and accessed. Larger problem sizes may see an increase in time to access the main memory. The static-scaling plots are therefore designed to capture both strong-scaling and weak-scaling characteristics on the same plot, and is therefore a very good indicator of the ideal range of problem sizes for a given number of MPI processes~\cite{staticscaling}.

\begin{figure}
    \centering
    \includegraphics[width=\textwidth]{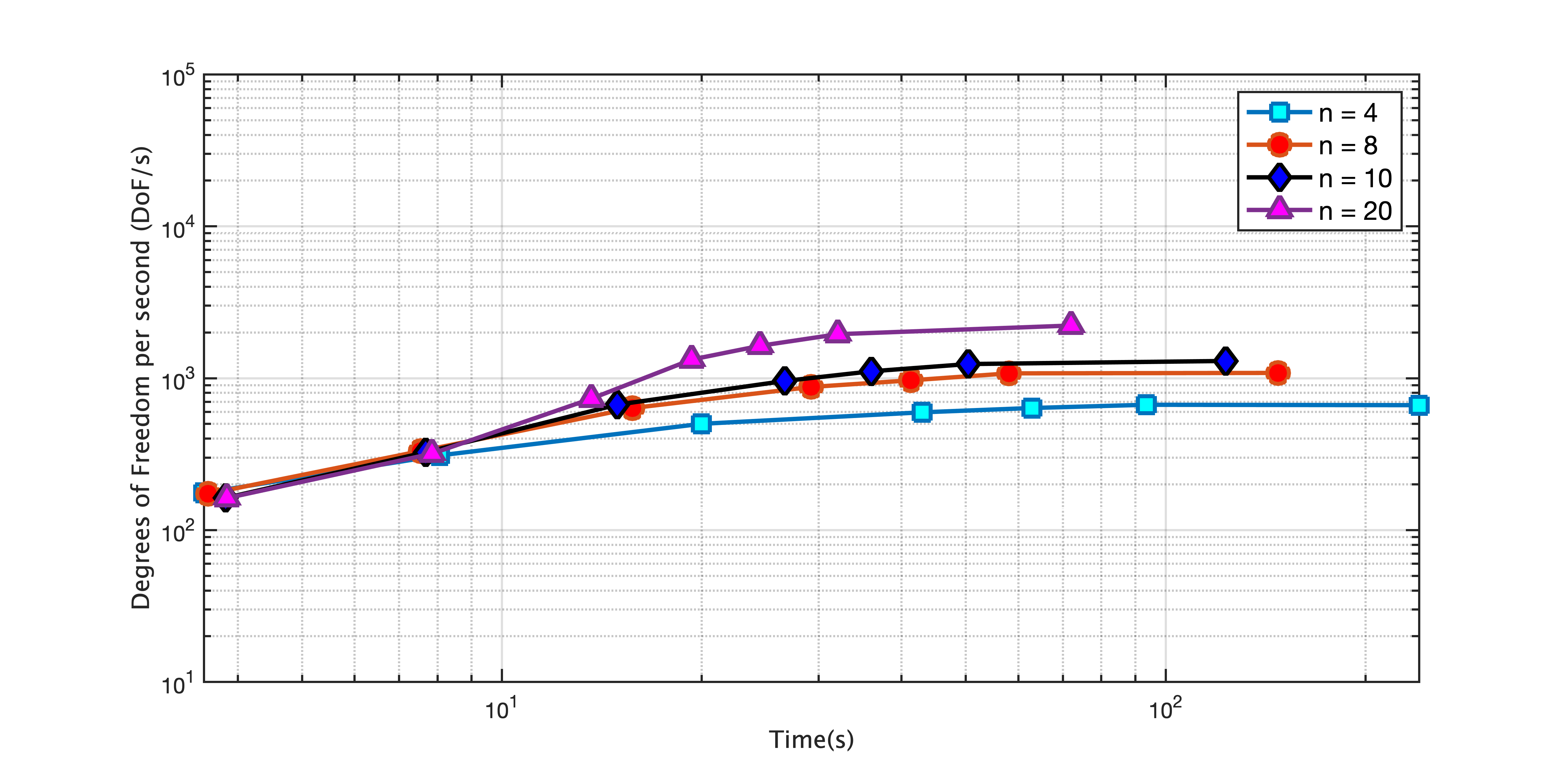}
    \caption{Static-scaling behavior of the spinodal decomposition problem using Cahn-Hilliard equation in 2D. Performance is compared for varying problem sizes running on different number of MPI processes ($n$).}
    \label{fig:cahn_scaling_2d}
\end{figure}

Figure \ref{fig:cahn_scaling_2d} shows the static-scaling plot of the spinodal decomposition problem in 2D. Problem sizes range from 625 degrees of freedom to $1.6\times10^5$. The problem was run using 4, 8, 10 and 20 MPI processes ($n$). The scaling plot indicates that method has excellent weak-scaling as the degrees of freedom solved per second remains constant as the problem size increases, given by the solution time. There are no memory effects as the problems get larger. The solution also shows good strong-scaling behaviour, as the performance improves with increasing number of MPI processes. The strong-scaling limit is a little different here. The time is takes to solve a problem with smaller sizes is very similar across all processes. Regardless, $n=4$ still has the least amount of overhead, and gives an almost-flat line throughout, while for $n = 20$, the static-scaling limit is the most apparent.

\begin{figure}
    \centering
    \includegraphics[width=\textwidth]{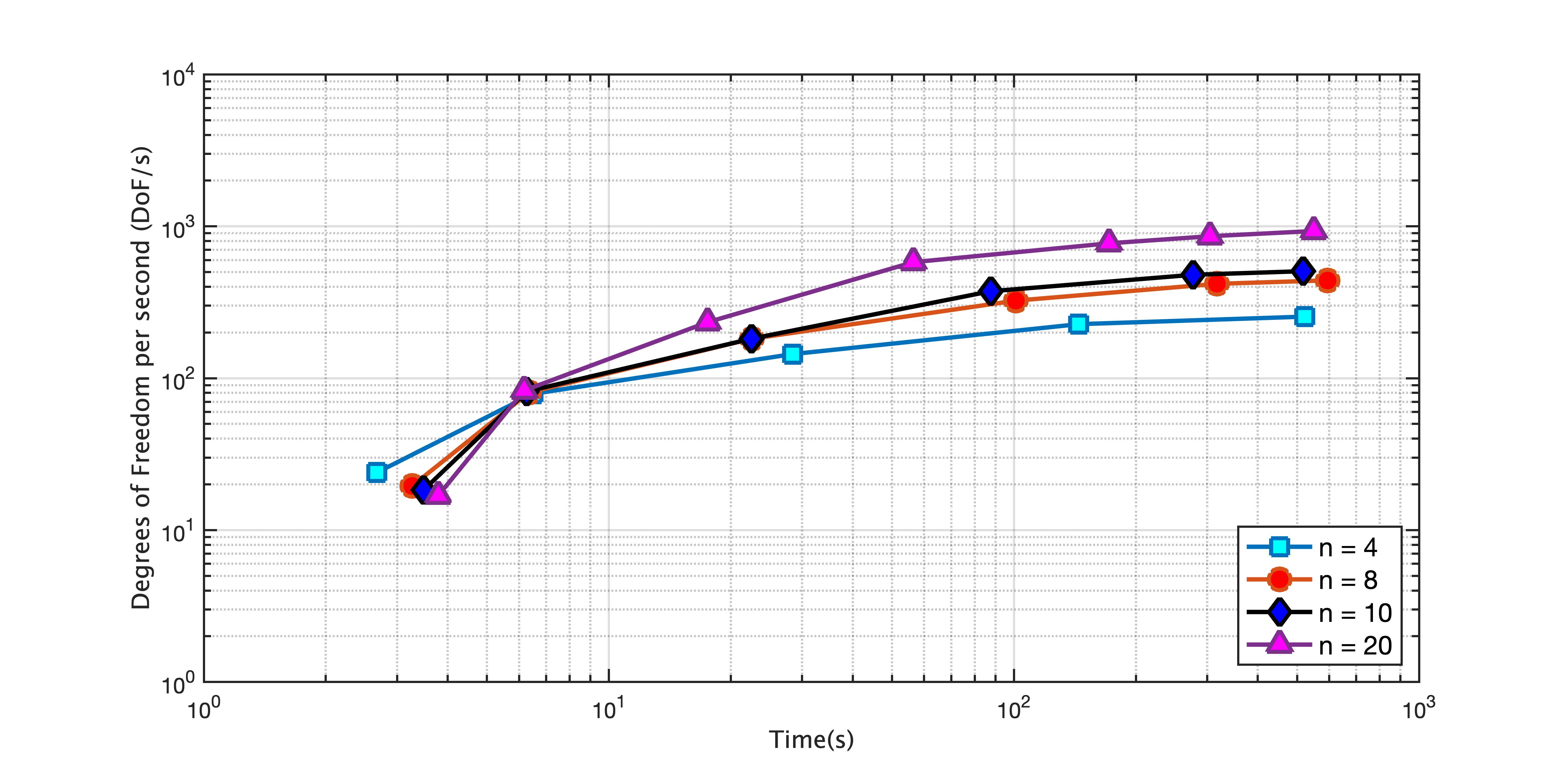}
    \caption{Static-scaling behavior of the spinodal decomposition problem using Cahn-Hilliard equation in 3D. Performance is compared for varying problem sizes running on different number of MPI processes ($n$).}
    \label{fig:cahn_scaling_3d}
\end{figure}

Figure \ref{fig:cahn_scaling_3d} shows the static-scaling plot of the spinodal decomposition problem in 3D. Problem sizes range from 64 degrees of freedom to $2.6\times10^5$. Similar to 2D, the problem was run using 4, 8, 10 and 20 MPI processes ($n$). The scaling plot shows a slight different behaviour for this problem. Even though the solution is strong-scaling well, since $n=20$ is outperforming other MPI processes, the problem is yet to achieve optimal scaling as no flat line can be seen in the middle region. Running the problem with increased problem sizes would show better weak-scaling results. However, there are no memory effects as the problems get larger. The strong-scaling limit has the expected behavior with $n=4$ having the least amount of overhead, and $n = 20$ the most.

\subsection{Eigenvalues, spectral radius and condition number}

\begin{figure}
    \centering
    \includegraphics[width=0.75\textwidth]{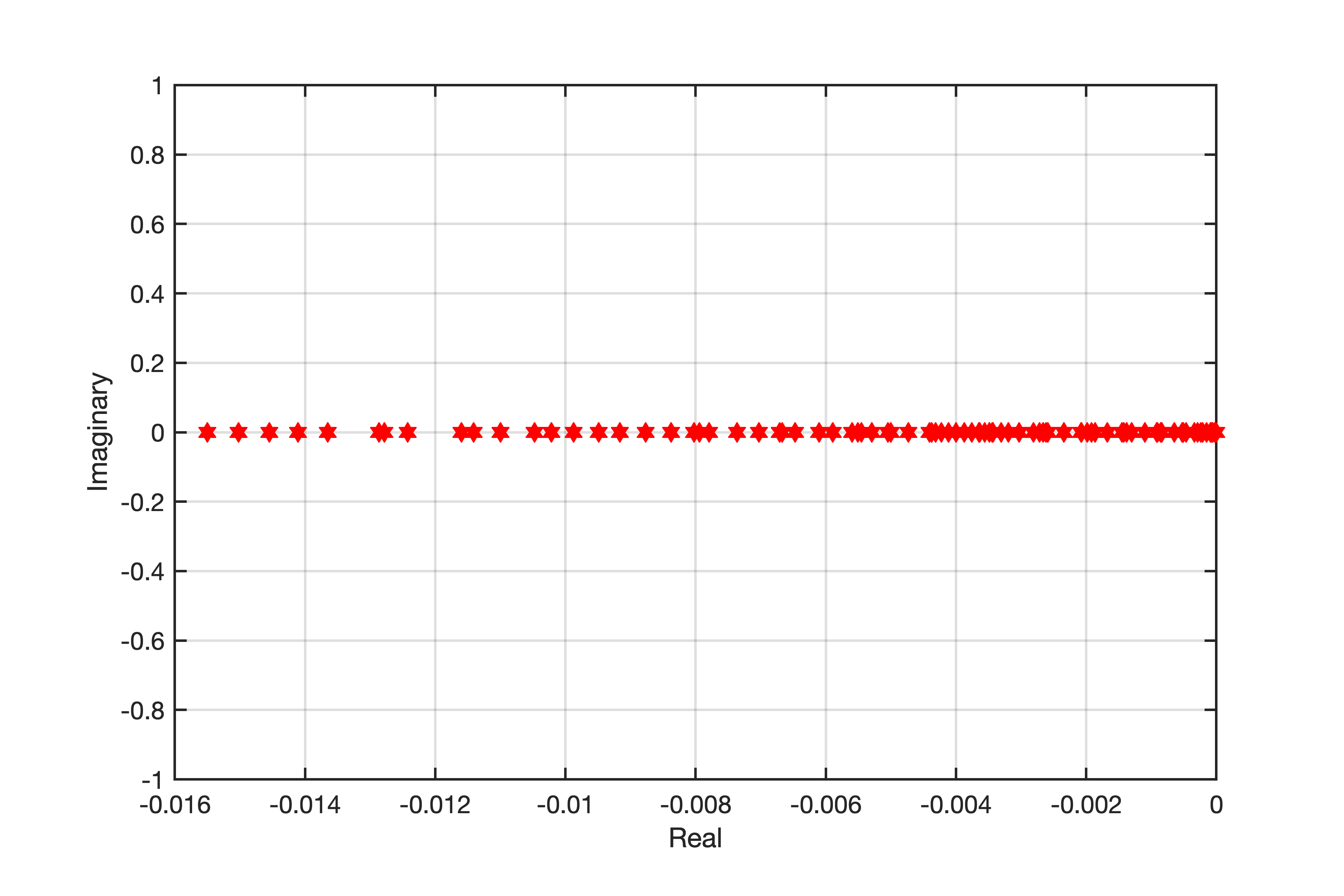}
    \caption{The spectrum of the bi-Laplacian matrix used in the spinodal decomposition problem.}
    \label{fig:spectrum}
\end{figure}
Next, consider the properties of the bi-Laplacian matrix used to solve the spinodal decomposition problem shown earlier.
Figure \ref{fig:spectrum} shows the spectrum, i.e., the distribution of the eigenvalues in the complex plane, of the 2D system. From the plot, it can be observed that the eigenvalues of the matrix only have real negative parts, hence leading to stability.

\renewcommand{\arraystretch}{1.25}
\begin{table}
\centering
\caption{Properties of the bi-Laplacian matrix (B: $\Delta(\Delta c)$) and the time-stepping matrix used for Cahn-Hilliard equation in 2D for a domain size of $[0,200]^2$ \textit{($\rho$: spectral radius)}.}
\begin{tabular}{ C{2cm} | C{2cm} | C{2cm} C{2cm} C{2.5cm} }
\toprule
$h$ & $\Delta t$ & $\rho(B)$ & $\rho(I+\Delta t B)$ & cond ($I+\Delta t B$) \\
\midrule
 8 & 2 & 0.0155 & 1.031 & 1.031\\
 4 & 1 & 0.25 & 1.25 & 1.25\\  
 2 & 0.5 & 4 & 3 & 3\\
 1 & 0.25 & 64 & 17 & 17\\
 0.5 & 0.125 & 1024 & 129 & 129\\
\bottomrule
\end{tabular}
\label{tab:bilap_prop}
\end{table}

Table \ref{tab:bilap_prop} lists the spectral radius, i.e., the absolute value of the largest eigenvalue, of the bi-Laplacian matrix and the corresponding time-stepping matrix, along with its condition number, for various spatial resolutions with $\Delta t = \frac{1}{4} h$. As the time-stepping matrix is normal, the condition number is given by the (absolute) ratio of the largest eigenvalue to the smallest one. As the smallest eigenvalue is almost one in all these cases, the condition number stays approximately equal to the spectral radius of the time-stepping matrices. Note that in the Cahn-Hilliard equation \eqref{eq:cahn}, the bi-Laplacian matrix on the right hand side of the equation has a negative sign in front of it. Therefore, here the eigenvalues correspond to negative of the bi-Laplacian matrix ($-B$). The time-stepping matrix thus becomes $[I+\Delta t B]$, which is what is used to check the condition number. It should come as no surprise that the spectral radius increases as the resolution decreases, as the matrix is getting bigger. The condition numbers are relatively small when the matrices are small and increase as resolution decreases, but the overall matrices are still well conditioned.

\subsection{Sparsity}

\begin{table}
\centering
\caption{Sparsity properties of Laplacian matrix and bi-Laplacian matrix used for the spinodal decomposition problem.}
\begin{tabular}{ C{0.5cm} | C{3cm} | C{1.5cm} C{1.8cm} | C{1.5cm} C{1.8cm}}
\toprule
\multicolumn{2}{C{3.5cm}}{} & \multicolumn{2}{C{3.3cm}}{Laplacian} & \multicolumn{2}{ C{3.3cm}}{bi-Laplacian}\\
\midrule
$h$ & size & non-zero entries & percentage & non-zero entries & percentage \\
\midrule
 8 & $625\times625$  & 3125 & 0.8 & 8125 & 2.08  \\ 
 4 & $2500\times2500$ & 12500 & 0.2 & 32500 & 0.52\\  
 2 & $10,000\times10,000$ & 50,000 & 0.05 & 130,000 & 0.13\\
 1 & $40,000\times40,000$ & 200,000 & 0.0125 & 520,000 & 0.0325\\
 0.5 & $160,000\times160,000$ & 800,000 & 0.003125 & 2,080,000 & 0.008125 \\
 \bottomrule
\end{tabular}
\label{tab:sparsity}
\end{table}

Table \ref{tab:sparsity} shows the sparsity properties of the bi-Laplacian matrix. The matrix is getting sparser by $75\%$ each time the resolution is reduced by half. Recall that bi-Laplacian matrix is obtained using \scomp of two Laplacian matrices. In fact, Laplacian is a five-point stencil, which gives a thirteen-point stencil when composed (in 2D) with itself:
\begin{align*}
    \text{Laplacian:}
        \left(
        \begin{array}{ccc}
         & 1 &  \\
        1 & -4 & 1 \\
         & 1 &   \\
        \end{array}
        \right),
    \quad\quad
    \text{bi-Laplacian:}
        \left(
        \begin{array}{ccccc}
         & & 1 &  & \\
         & 2 & -8 & 2 &\\
        1&-8 & 20 & -8 & 1\\
        & 2 & -8 & 2 &\\
        & & 1 &  & \\
        \end{array}
        \right).
\end{align*}

\begin{figure}
\begin{subfigure}{0.49\textwidth}
\includegraphics[width=\linewidth]{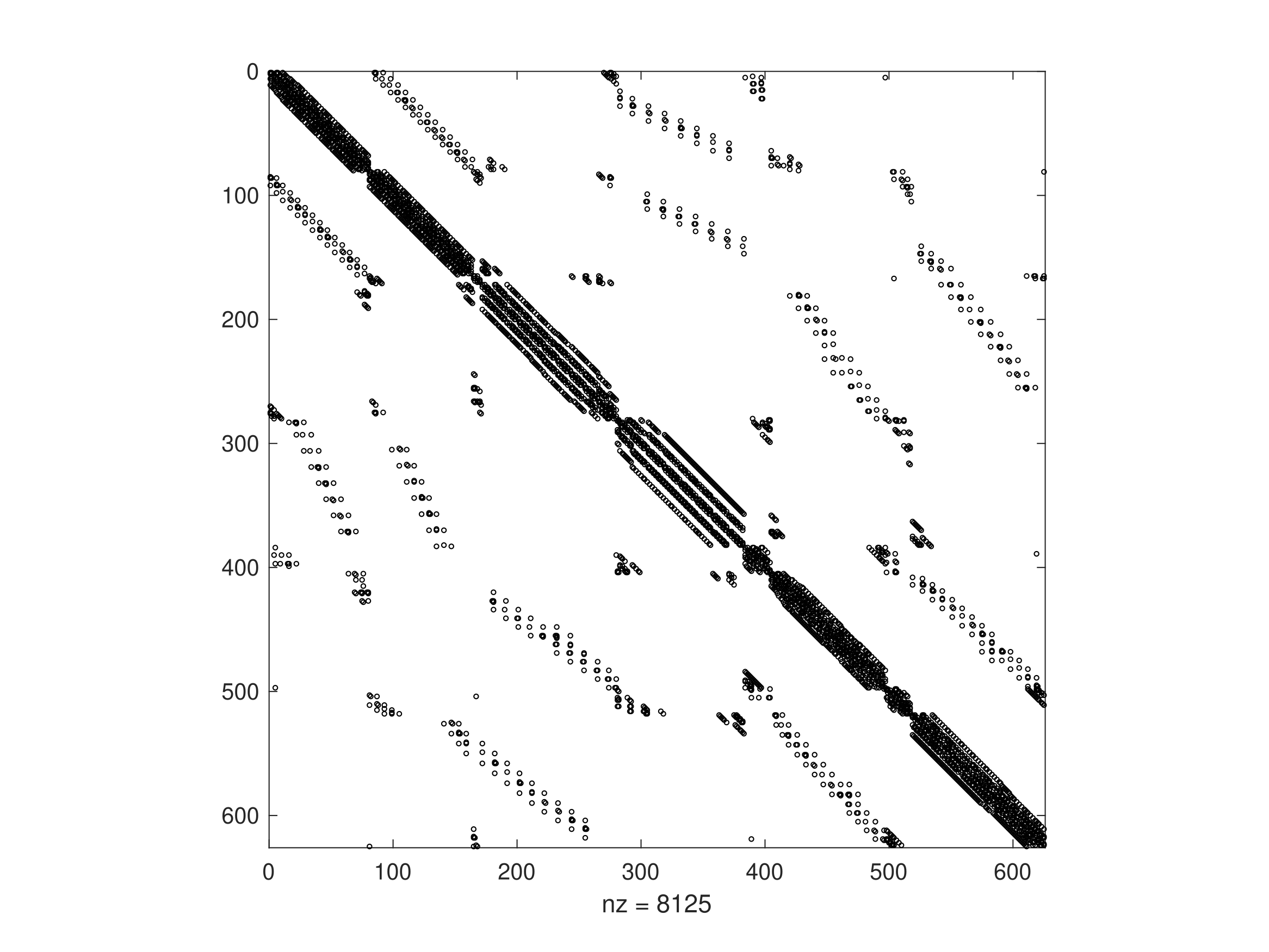}
\caption{$h = 8$}
\end{subfigure}\hspace*{\fill}
\begin{subfigure}{0.49\textwidth}
\includegraphics[width=\linewidth]{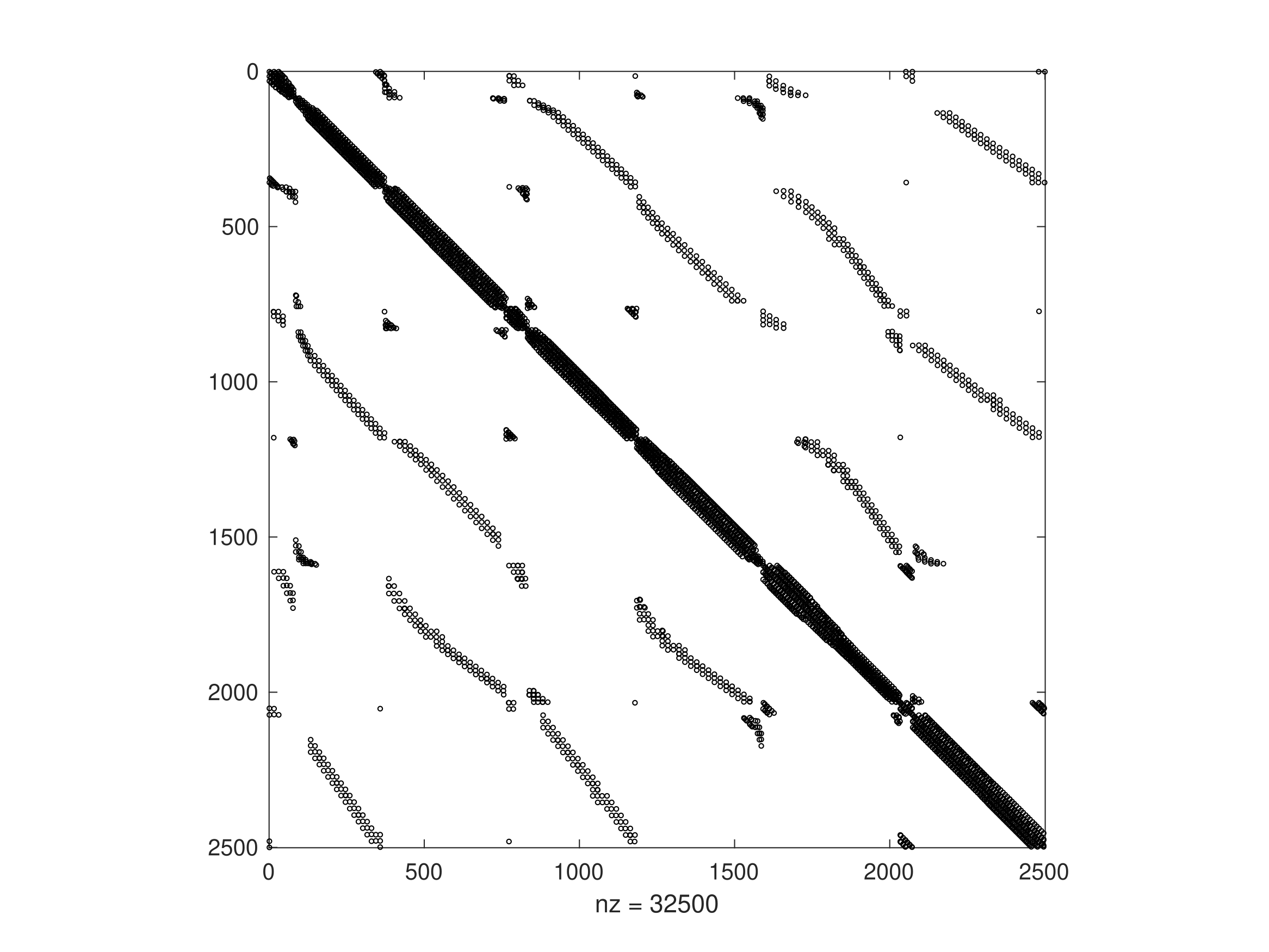}
\caption{$h = 4$}
\end{subfigure}
\begin{subfigure}{0.49\textwidth}
\includegraphics[width=\linewidth]{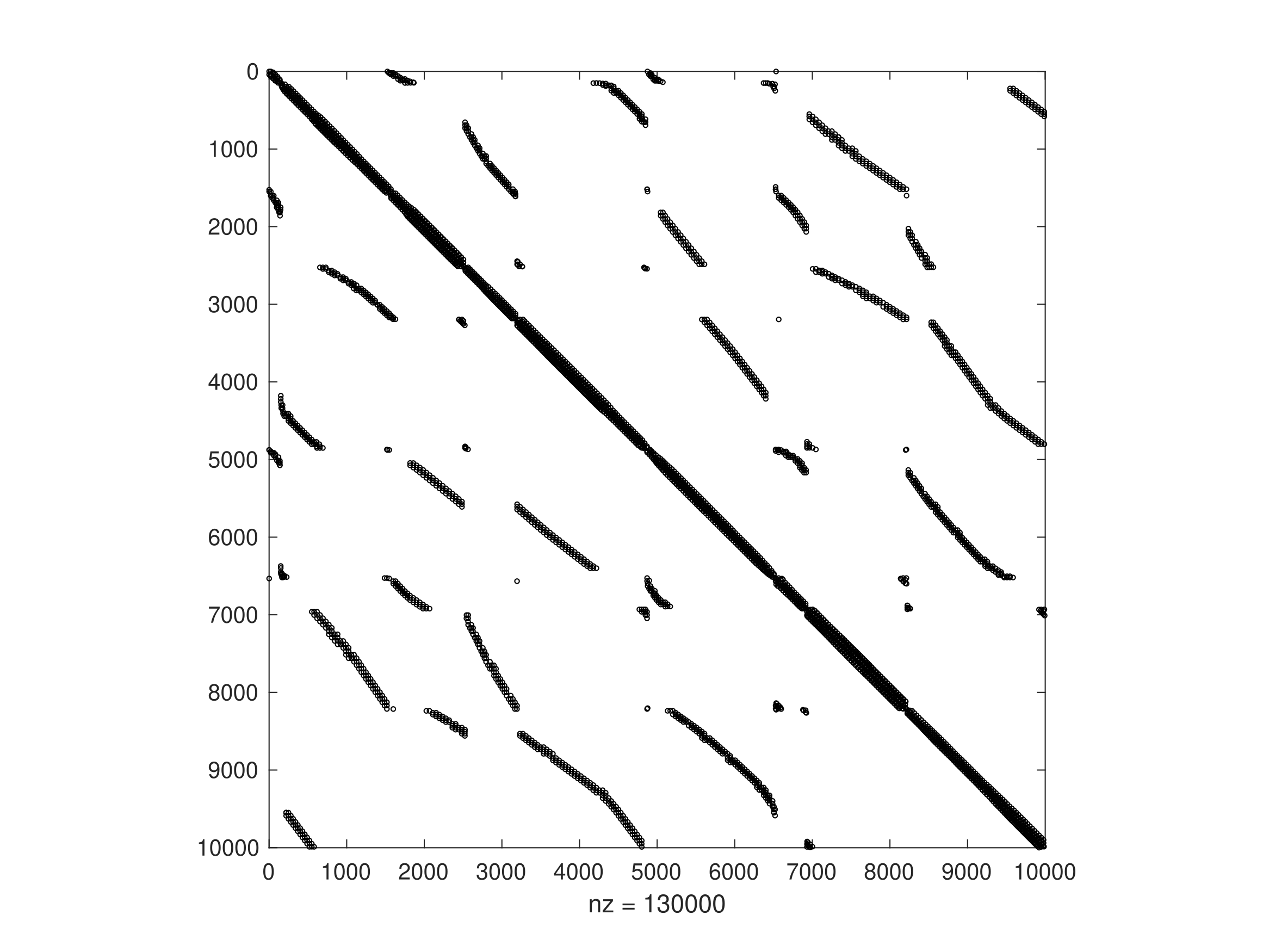}
\caption{$h = 2$}
\end{subfigure}\hspace*{\fill}
\begin{subfigure}{0.49\textwidth}
\includegraphics[width=\linewidth]{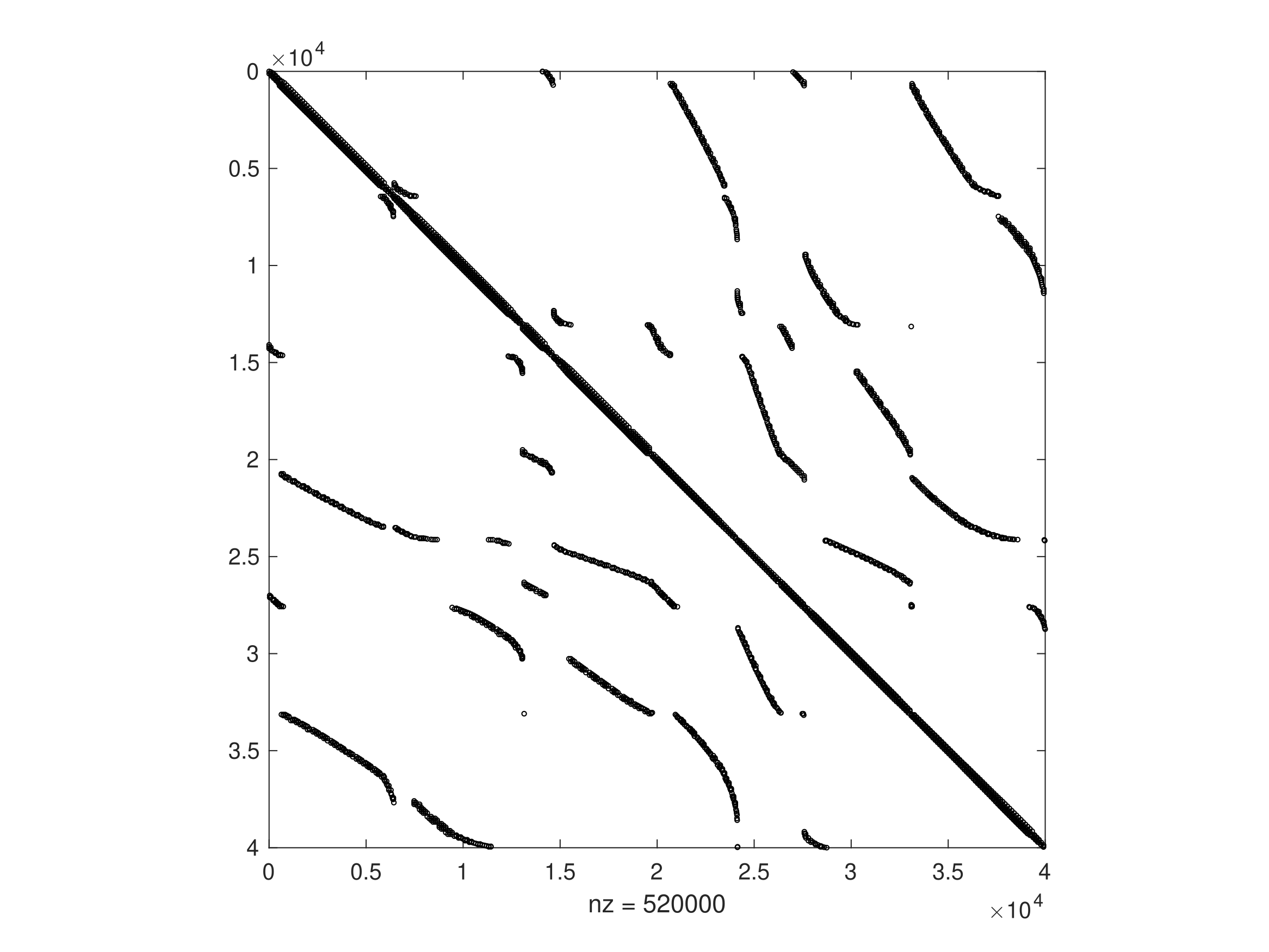}
\caption{$h = 1$}
\end{subfigure}
\caption{Sparsity pattern of the Cahn-Hilliard time-stepping matrices for various $h$.} \label{fig:sparsity}
\end{figure}

Note that the ratio between the non-zero entries of Laplacian and bi-Laplacian matrices stays consistent at at a ratio of $5:13$, thus, validating that the correct sparsity pattern is maintained when stencils are composed.

Figure \ref{fig:sparsity} shows the sparsity structure of bi-Laplacian matrices at different resolutions used for the spinodal decomposition problem in 2D. The figure shows that the same sparsity structure is maintained for various resolutions.

\section{Concluding Remarks}
\label{sec:conc}

In this work we make use of composition to form finite difference stencils which can then be used to numerically evaluate derivatives and solve partial differential equations. In stencil composition, two stencils with arbitrary derivative-orders are composed to obtain a stencil with a derivative-order equal to the sum of each individual stencil approximation. We represent stencils for various orders of derivative as stencil vectors, with the elements being the coefficients of the truncation error terms, allowing for the determination of the leading-order error term of the composed stencil. We show that stencil composition is associative and prove that the order-of-accuracy of the composed stencil will never fall below the lowest-order accuracy of the stencils being composed. The stability of stencil composition is also explored using two example PDEs. Numerical examples, both in one and two-dimensions, verify our findings regarding order of accuracy via convergence tests. A PDE application is also shown, wherein a boundary value problem involving the biharmonic equation is discretized using the composition of two Laplacian stencils, and the convergence rate is verified. A benchmark problem involving the Cahn-Hilliard equation based model was solved using a two-dimensional as well as a three-dimensional computational domain. For this experiment,  performance analyses were conducted which included static-scaling analysis and temporal convergence tests, thus validating the discretization method.

There is an important caveat of this work that must also be discussed. The work here only holds if the inner stencils for a composition do not vary from location-to-location in the outer stencil. For example, consider composing two finite difference stencils of the second-derivative in a domain. At the boundary it may be tempting to mix one-sided stencils with center-stencils. This is not advised, as the composition is no longer between a single inner and single outer function, but different functions, which creates unpredictable results. As a demonstration consider the composition of first-derivative approximations to obtain a second-derivative stencil. Let the outer stencil be given by $\vb{u}=\{1,-1\}$ with weights $\vb{a}=\{1/(2h),-1/(2h)\}$. At location $x_1$ the standard stencil is used: $\vb{v}_1=\vb{u}$ and $\vb{b}_1=\vb{a}$. At location $x_{-1}$ a forward-approximation is used: $\vb{v}_{-1}=\{1,0,-1\}$ and $\vb{b}_{-1}=\{-1/(2h), 2/h, -3/(2h)\}$. Composition using these results in a series of $(\{-1/2, 0, -1/24,\ldots\},\beta=2)$, which is clearly not an approximation to the second-derivative. Other combinations may result in the negative of the expected result, zero, or something else completely. Note that it is perfectly acceptable to mix approximations at different target locations. For example, at a domain boundary we can use all one-sided approximations for the inner stencil and center-approximations at the interior.

With this in mind, our results demonstrate that it is possible to construct complex differential stencils with guaranteed accuracy via the composition of lower-derivative approximations. This is the first step towards facilitating the formation of large-scale linear systems of arbitrary partial differential equations in a systematic and automatic manner. Future work will include the implementation of these concepts into numerical tools for the wider community, and an investigation of the composition between interpolation and differentiation operations, similar to those in the Closest Point Method.

\bibliography{refs}

\newpage
\appendix

\section{\normalsize{Derivation of first-order accurate third derivative stencil using stencil composition}}\label{sec:appendix}

If stencil $A$ corresponds to $f'(x_0)$,
\begin{align*}
    f'(x_0) &\longrightarrow \frac{f(x_1) - f(x_0)}{h} + \mathcal{O}(h),
\end{align*}
and, stencil $B$ corresponds to $f''(x_0)$,
\begin{align*}
    f''(x_0) &\longrightarrow \frac{f(x_2) - 2f(x_1)+f(x_0)}{h^2} + \mathcal{O}(h).
\end{align*}

This results in $\vb{u}=\{1,0\}$ and $\vb{v}=\{2,1,0\}$ as the associated integer vectors with weights $\vb{a}=\{1/h,-1/h\}$ and $\vb{b}=\{1/h^2,-2/h^2,1/h^2\}$, respectively.

The composition $B(A)$ can thus be derived in the following way,

\begin{align*}
    B(A) &= \sum_j \sum_i a_i b_j  \left(\sum_{|\alpha| \ge 0} h^{\alpha} \dfrac{\left(\vb{u}_i+\vb{v}_j\right)^{\alpha}}{\alpha!} f^{(\alpha)}(\vb{x}_0)\right)\nonumber\\
        &=\sum_j b_j
            \left[
                \dfrac{1}{h}\left(\sum_{|\alpha| \ge 0} h^{\alpha} \dfrac{\left(1+\vb{v}_j\right)^{\alpha}}{\alpha!} f^{(\alpha)}(\vb{x}_0)\right)
                -\dfrac{1}{h}\left(\sum_{|\alpha| \ge 0} h^{\alpha} \dfrac{\vb{v}_j^{\alpha}}{\alpha!} f^{(\alpha)}(\vb{x}_0)\right)
            \right]\nonumber\\
        &=\dfrac{1}{h} \left[
                \dfrac{1}{h^2}\left(\sum_{|\alpha| \ge 0} h^{\alpha} \dfrac{3^{\alpha}}{\alpha!} f^{(\alpha)}(\vb{x}_0)\right)
                -\dfrac{2}{h^2}\left(\sum_{|\alpha| \ge 0} h^{\alpha} \dfrac{2^{\alpha}}{\alpha!} f^{(\alpha)}(\vb{x}_0)\right)
                +\dfrac{1}{h^2}\left(\sum_{|\alpha| \ge 0} h^{\alpha} \dfrac{1^{\alpha}}{\alpha!} f^{(\alpha)}(\vb{x}_0)\right)
            \right]\nonumber\\
        & \qquad-\dfrac{1}{h} \left[
                \dfrac{1}{h^2}\left(\sum_{|\alpha| \ge 0} h^{\alpha} \dfrac{2^{\alpha}}{\alpha!} f^{(\alpha)}(\vb{x}_0)\right)
                -\dfrac{2}{h^2}\left(\sum_{|\alpha| \ge 0} h^{\alpha} \dfrac{1^{\alpha}}{\alpha!} f^{(\alpha)}(\vb{x}_0)\right)
                +\dfrac{1}{h^2}\left(\sum_{|\alpha| \ge 0} h^{\alpha} \dfrac{0^{\alpha}}{\alpha!} f^{(\alpha)}(\vb{x}_0)\right)
            \right]\nonumber\\
        &=\dfrac{1}{h^3}\sum_{|\alpha| \ge 0} h^{\alpha} \dfrac{3^{\alpha}-3(2)^{\alpha}+3}{\alpha!} f^{(\alpha)}(\vb{x}_0)-\dfrac{1}{h^3}f(\vb{x}_0)\nonumber\\
        &\longrightarrow \dfrac{1}{h^3}(\{0,0,0,1,\frac{3}{2},\frac{5}{4},\ldots\},\beta=0)=(\{1,\frac{3}{2},\frac{5}{4},\ldots\},\beta=3).
\end{align*}

\section{Calculation of stability requirement for $\partial_t f=-\partial_x\left(\partial_x\left(\partial_{xx}f\right)\right)$}\label{sec:appendixB}

Consider the discretization of $\partial_t f=-\partial_x\left(\partial_x\left(\partial_{xx}f\right)\right)$ about a point $x_0$. Composing the inner two derivatives using
standard center-finite differences results in
\begin{align*}
    \dfrac{\partial}{\partial x}\left(\dfrac{\partial f}{\partial xx}\right) = \dfrac{-f_{-2}+2f_{-1}-2f_{+1}+f_{+2}}{2h^3},
\end{align*}
where $f_{i}=f(x_0+i h)$ and is the standard, compact second-order accurate discretization for $\partial_{xxx}f$. Composing this with the first-derivative results in 
\begin{align*}
    \dfrac{\partial}{\partial x}\left(\dfrac{\partial f}{\partial xxx}\right) = \dfrac{f_{-3}-2f_{-2}-f_{-1}+4f_{0}-f_{+1}-2f_{+2}+f_{+3}}{4h^4},
\end{align*}
which has wider support compared to the compact schemes which uses points $[-2,2]$.

As before assume that the solution for the $n^{th}$ time step is $f_j^n=\xi_k^n e^{\imath k j h}$ where $k$ is the wave mode and $\xi_k$ is the growth factor. Using this in the discretization, dividing by $f_0^n$ results, and solving for $\xi_k$ results in
\begin{equation*}
    \xi_k = 1 + \dfrac{\Delta t}{2h^4}\left(\cos{hk}+2\cos{2hk}-\cos{3hk}-2\right).
\end{equation*}

Stability requires that $|\xi_k|\leq 1$ and thus
\begin{align*}
    -1 & \leq 1 + \dfrac{\Delta t}{2h^4}\left(\cos{hk}+2\cos{2hk}-\cos{3hk}-2\right) \leq 1\\
    -2 & \leq \dfrac{\Delta t}{2h^4}\left(\cos{hk}+2\cos{2hk}-\cos{3hk}-2\right) \leq 0 \nonumber\\
    -4 & \leq \dfrac{\Delta t}{h^4}\left(\cos{hk}+2\cos{2hk}-\cos{3hk}-2\right) \leq 0.
\end{align*}
As $\left(\cos{hk}+2\cos{2hk}-\cos{3hk}-2\right)\in [-128/27,0]$ we have
\begin{equation}
    0 \leq \dfrac{\Delta t}{h^4} \leq \dfrac{27}{32},
\end{equation}
which results in a time-step restriction of $\Delta t\leq 27 h^4/32$.

Both the compact finite difference approximation and composing $\partial_{xx}\left(\partial_{xx}\right)$ give a discretization of
\begin{align*}
    \dfrac{\partial f}{\partial xxxx} = \dfrac{f_{-2}-4f_{-1}+6f_{0}-4f_{+1}+f_{+2}}{h^4},
\end{align*}
which results in a growth factor of 
\begin{equation*}
    \xi_k = 1+\dfrac{\Delta t}{h^4}\left(8\cos{hk}-2\cos{2hk}-6\right).
\end{equation*}
Ensuring that $|\xi_k|\leq 1$ requires that $\Delta t\leq h^4/8$.

\end{document}